\numberwithin{equation}{section}
\theoremstyle{plain}
\newtheorem{Th}{Theorem}[section]
\newtheorem{Lemma}[Th]{Lemma}
\newtheorem{Cor}[Th]{Corollary}
\newtheorem{Prop}[Th]{Proposition}
 \theoremstyle{definition}
\newtheorem{Def}[Th]{Definition}
\newtheorem{Rem}[Th]{Remark}
\newtheorem{?}[Th]{Problem}
\newtheorem{Ex}[Th]{Example}
\newtheorem{Exs}[Th]{Examples}
\begin{document}

\title{some generalizations of frames in Hilbert modules}

\author[Mohamed Rossafi]{Mohamed Rossafi}

\address{%
	Department of Mathematics\\
	University of Ibn Tofail\\
	B.P. 133, Kenitra \\
	Morocco}

\email{rossafimohamed@gmail.com}

\author{Samir Kabbaj}
\address{Department of Mathematics\\
	University of Ibn Tofail\\
	B.P. 133, Kenitra \\
	Morocco}
\email{samkabbaj@yahoo.fr}

 \subjclass[2010]{Primary: 42C15; Secondary: 47C15}

 \keywords{Frame, generalized frame, $C^{\ast}$-algebra, Hilbert $C^{\ast}$-modules.}

\begin{abstract} Frames play significant role in various areas of science and engineering. In this paper, we introduce the concepts of frames for $End_{\mathcal{A}}^{\ast}(\mathcal{H, K})$ and their generalizations. Moreover, we obtain some new results for generalized frames in Hilbert modules.
\end{abstract}

\maketitle

\section{Introduction and preliminaries} 

The concept of frames in Hilbert spaces has been introduced by
Duffin and Schaeffer \cite{Duf} in 1952 to study some deep problems in nonharmonic Fourier
series. After the fundamental paper \cite{13} by Daubechies, Grossman and Meyer, frame
theory began to be widely used, particularly in the more specialized context of wavelet
frames and Gabor frames \cite{Gab}.

Traditionally, frames have been used in signal processing, image processing, data compression
and sampling theory. A discreet frame is a countable family of
elements in a separable Hilbert space which allows for a stable, not necessarily unique,
decomposition of an arbitrary element into an expansion of the frame elements.

In this paper, we introduce the concepts of frames for $End_{\mathcal{A}}^{\ast}(\mathcal{H, K})$ and their generalizations. Also we prove some new properties and we give some examples.

Let $I$ be a finite or countable index subset of $\mathbb{N}$. In this section we briefly recall the definitions and basic properties of $C^{\ast}$-algebra, Hilbert $\mathcal{A}$-modules, Frames in Hilbert $\mathcal{A}$-modules and their generalizations. For information about frames in Hilbert spaces we refer to \cite{Ch}. Our reference for $C^{\ast}$-algebras is \cite{{Dav}, {Con}}. For a $C^{\ast}$-algebra $\mathcal{A}$ if $a\in\mathcal{A}$ is positive we write $a\geq 0$ and $\mathcal{A}^{+}$ denotes the closed cone of positive elements in $\mathcal{A}$.
\begin{Def}\cite{Con}.
	If $\mathcal{A}$ is a Banach algebra, an involution is a map $ a\rightarrow a^{\ast} $ of $\mathcal{A}$ into itself such that for all $a$ and $b$ in $\mathcal{A}$ and all scalars $\alpha$ the following conditions hold:
	\begin{enumerate}
		\item  $(a^{\ast})^{\ast}=a$.
		\item  $(ab)^{\ast}=b^{\ast}a^{\ast}$.
		\item  $(\alpha a+b)^{\ast}=\bar{\alpha}a^{\ast}+b^{\ast}$.
	\end{enumerate}
\end{Def}
\begin{Def}\cite{Con}.
	A $\mathcal{C}^{\ast}$-algebra $\mathcal{A}$ is a Banach algebra with involution such that :$$\|a^{\ast}a\|=\|a\|^{2}$$ for every $a$ in $\mathcal{A}$.
\end{Def}
\begin{Exs}
	\begin{enumerate}
		\item	$ B(\mathcal{H}) $	the algebra of bounded operators on a Hilbert space $\mathcal{H}$, is a  $\mathcal{C}^{\ast}$-algebra, where for each operator $A$, $A^{\ast}$ is the adjoint of $A$.
		\item $C(X)$ the algebra of continuous functions on a compact space $X$, is an abelian $\mathcal{C}^{\ast}$-algebra, where $f^{\ast}(x):=\overline{f(x)}$.
		\item $C_{0}(X)$ the algebra of continuous functions on a locally compact space $X$ that vanish at infinity is an abelian $\mathcal{C}^{\ast}$-algebra, where $f^{\ast}(x):=\overline{f(x)}$.
	\end{enumerate}	
\end{Exs}
\begin{Def}\cite{Kap}.
	Let $ \mathcal{A} $ be a unital $C^{\ast}$-algebra and $\mathcal{H}$ be a left $ \mathcal{A} $-module, such that the linear structures of $\mathcal{A}$ and $ \mathcal{H} $ are compatible. $\mathcal{H}$ is a pre-Hilbert $\mathcal{A}$-module if $\mathcal{H}$ is equipped with an $\mathcal{A}$-valued inner product $\langle.,.\rangle :\mathcal{H}\times\mathcal{H}\rightarrow\mathcal{A}$, such that is sesquilinear, positive definite and respects the module action. In the other words,
	\begin{enumerate}
		\item $ \langle x,x\rangle\geq0 $ for all $ x\in\mathcal{H} $ and $ \langle x,x\rangle=0$ if and only if $x=0$.
		\item $\langle ax+y,z\rangle=a\langle x,y\rangle+\langle y,z\rangle$ for all $a\in\mathcal{A}$ and $x,y,z\in\mathcal{H}$.
		\item $ \langle x,y\rangle=\langle y,x\rangle^{\ast} $ for all $x,y\in\mathcal{H}$.
	\end{enumerate}	 
\end{Def}
For $x\in\mathcal{H}, $ we define $||x||=||\langle x,x\rangle||^{\frac{1}{2}}$. If $\mathcal{H}$ is complete with $||.||$, it is called a Hilbert $\mathcal{A}$-module or a Hilbert $C^{\ast}$-module over $\mathcal{A}$. For every $a$ in $C^{\ast}$-algebra $\mathcal{A}$, we have $|a|=(a^{\ast}a)^{\frac{1}{2}}$ and the $\mathcal{A}$-valued norm on $\mathcal{H}$ is defined by $|x|=\langle x, x\rangle^{\frac{1}{2}}$ for $x\in\mathcal{H}$.
\begin{Exs}
	\begin{enumerate}
		
		\item Let $X$ be a locally compact Hausdorff space and $\mathcal{H}$ a Hilbert space, the Banach space $C_{0}(X, \mathcal{H})$ of all continuous $\mathcal{H}$-valued functions vanishing at infinity is a Hilbert $C^{\ast}$-module over the $C^{\ast}$-algebra $C_{0}(X)$ with inner product $\langle f, g\rangle(x):= \langle f(x), g(x)\rangle$ and module operation $(\phi f)(x)= \phi(x)f(x)$, for all $\phi\in C_{0}(X)$ and $f\in C_{0}(X, \mathcal{H})$.
		\item If $ \{\mathcal{H}_{k}\}_{k\in\mathbf{N}} $ is a countable set of Hilbert $\mathcal{A}$-modules, then one can define their direct sum $ \oplus_{k\in\mathbb{N}}\mathcal{H}_{k} $. On the $\mathcal{A}$-module $ \oplus_{k\in\mathbb{N}}\mathcal{H}_{k} $ of all sequences $x=(x_{k})_{k\in\mathbb{N}}: x_{k}\in\mathcal{H}_{k}$, such that the series $ \sum_{k\in\mathbb{N}}\langle x_{k}, x_{k}\rangle_{\mathcal{A}} $ is norm-convergent in the $\mathcal{C}^{\ast}$-algebra $\mathcal{A}$, we define the inner product by
		\begin{equation*}
		\langle x, y\rangle:=\sum_{k\in\mathbb{N}}\langle x_{k}, y_{k}\rangle_{\mathcal{A}} 
		\end{equation*}
		for $x, y\in\oplus_{k\in\mathbb{N}}\mathcal{H}_{k} $.
		
		Then $\oplus_{k\in\mathbb{N}}\mathcal{H}_{k}$ is a Hilbert $\mathcal{A}$-module.
		
		The direct sum of a countable number of copies of a Hilbert $\mathcal{C}^{\ast}$-module $\mathcal{H}$ is denoted by $l^{2}(\mathcal{H})$.
	\end{enumerate}
\end{Exs}
Let $\mathcal{H}$ and $\mathcal{K}$ be two Hilbert $\mathcal{A}$-modules, A map $T:\mathcal{H}\rightarrow\mathcal{K}$ is said to be adjointable if there exists a map $T^{\ast}:\mathcal{K}\rightarrow\mathcal{H}$ such that $\langle Tx,y\rangle_{\mathcal{A}}=\langle x,T^{\ast}y\rangle_{\mathcal{A}}$ for all $x\in\mathcal{H}$ and $y\in\mathcal{K}$.

We also reserve the notation $End_{\mathcal{A}}^{\ast}(\mathcal{H},\mathcal{K})$ for the set of all adjointable operators from $\mathcal{H}$ to $\mathcal{K}$ and $End_{\mathcal{A}}^{\ast}(\mathcal{H},\mathcal{H})$ is abbreviated to $End_{\mathcal{A}}^{\ast}(\mathcal{H})$.

$End_{\mathcal{A}}^{\ast}(\mathcal{H},\mathcal{K})$ is a Hilbert $End_{\mathcal{A}}^{\ast}(\mathcal{K})$-module  with the inner product $\langle T, S\rangle=TS^{\ast}$, $\forall T, S\in B(\mathcal{H,K})$.
\begin{Lemma}\label{2.8}
	\cite{Ali}. Let $\mathcal{H}$ and $\mathcal{K}$ be two Hilbert $\mathcal{A}$-modules and $T\in End_{\mathcal{A}}^{\ast}(\mathcal{H},\mathcal{K})$.
	\begin{itemize}
		\item [(i)] If $T$ is injective and $T$ has closed range, then the adjointable map $T^{\ast}T$ is invertible and $$\|(T^{\ast}T)^{-1}\|^{-1}I_{\mathcal{H}}\leq T^{\ast}T\leq\|T\|^{2}I_{\mathcal{H}}.$$
		\item  [(ii)]	If $T$ is surjective, then the adjointable map $TT^{\ast}$ is invertible and $$\|(TT^{\ast})^{-1}\|^{-1}I_{\mathcal{K}}\leq TT^{\ast}\leq\|T\|^{2}I_{\mathcal{K}}.$$
	\end{itemize}	
\end{Lemma}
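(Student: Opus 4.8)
The plan is to establish (i) directly and then obtain (ii) by applying (i) to $T^{\ast}$. Throughout I work inside the $C^{\ast}$-algebra $\mathcal{B}=End_{\mathcal{A}}^{\ast}(\mathcal{H})$, in which $T^{\ast}T$ is a positive element with $\|T^{\ast}T\|=\|T\|^{2}$ by the $C^{\ast}$-identity. The two upper bounds require no hypothesis on the range: since $T^{\ast}T\geq 0$ and every positive element $a$ of a $C^{\ast}$-algebra satisfies $a\leq\|a\|\,I$ (because $\sigma(a)\subseteq[0,\|a\|]$), applying this to $a=T^{\ast}T$ gives $T^{\ast}T\leq\|T\|^{2}I_{\mathcal{H}}$, and the same argument in $End_{\mathcal{A}}^{\ast}(\mathcal{K})$ gives $TT^{\ast}\leq\|T^{\ast}\|^{2}I_{\mathcal{K}}=\|T\|^{2}I_{\mathcal{K}}$.

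The core of (i) is the invertibility of $T^{\ast}T$ in $\mathcal{B}$. The key structural input is the standard theorem for Hilbert $C^{\ast}$-modules that an adjointable operator with closed range has complemented kernel and image, that its adjoint also has closed range, and that $\mathcal{H}=\ker T\oplus\im T^{\ast}$ and $\mathcal{K}=\ker T^{\ast}\oplus\im T$. Since $T$ is injective, $\ker T=\{0\}$, so $\im T^{\ast}=\mathcal{H}$ and $T^{\ast}$ is surjective. I would then note that $\ker(T^{\ast}T)=\ker T=\{0\}$, because $T^{\ast}Tx=0$ forces $\langle Tx,Tx\rangle=\langle T^{\ast}Tx,x\rangle=0$ and hence $Tx=0$. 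Writing $T^{\ast}T$ as the composite $\mathcal{H}\xrightarrow{T}\im T\xrightarrow{T^{\ast}}\mathcal{H}$, the first map is a bijection onto its closed complemented range, while $T^{\ast}$ restricted to $\im T$ is injective (as $\ker T^{\ast}\cap\im T=\{0\}$) and surjective (as $T^{\ast}$ annihilates $\ker T^{\ast}$, whence $\im T^{\ast}=T^{\ast}(\im T)=\mathcal{H}$). Thus $T^{\ast}T$ is a bijective adjointable operator, and such an operator is invertible in $\mathcal{B}$, with positive inverse $(T^{\ast}T)^{-1}$, since its inverse is automatically bounded by the open mapping theorem and adjointable via $((T^{\ast}T)^{-1})^{\ast}=((T^{\ast}T)^{\ast})^{-1}$.

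The lower bound then follows from spectral calculus in $\mathcal{B}$. For a positive invertible element $a$ one has $\sigma(a)\subseteq[\|a^{-1}\|^{-1},\|a\|]$, because $\|a^{-1}\|$ is the spectral radius of $a^{-1}$, equal to $(\min\sigma(a))^{-1}$; consequently $a\geq\|a^{-1}\|^{-1}I$. Taking $a=T^{\ast}T$ yields $\|(T^{\ast}T)^{-1}\|^{-1}I_{\mathcal{H}}\leq T^{\ast}T$, which together with the upper bound completes (i). For (ii), surjectivity of $T$ makes $\im T=\mathcal{K}$ closed and forces $\ker T^{\ast}=(\im T)^{\perp}=\{0\}$, so $T^{\ast}$ is injective with closed range; applying part (i) to $T^{\ast}$ in place of $T$, and using $\|T^{\ast}\|=\|T\|$, gives the invertibility of $(T^{\ast})^{\ast}T^{\ast}=TT^{\ast}$ together with the required two-sided estimate. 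I expect the single genuinely nontrivial ingredient to be the closed-range structure theorem (complementability of $\ker T$ and $\im T$ and closedness of $\im T^{\ast}$); once that is in hand, the remainder is routine positivity and spectral-radius bookkeeping in the $C^{\ast}$-algebra.
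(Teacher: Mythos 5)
The paper does not prove this lemma at all: it is quoted verbatim from the reference \cite{Ali} as a preliminary fact, so there is no in-paper argument to compare yours against. Judged on its own, your proof is correct and complete. The upper bounds via $a\leq\|a\|I$ for positive $a$, the identification $\ker(T^{\ast}T)=\ker T$, the surjectivity of $T^{\ast}T$ obtained from the decompositions $\mathcal{H}=\ker T\oplus\im T^{\ast}$ and $\mathcal{K}=\ker T^{\ast}\oplus\im T$, the adjointability and positivity of the inverse, the spectral estimate $a\geq\|a^{-1}\|^{-1}I$ for positive invertible $a$, and the reduction of (ii) to (i) applied to $T^{\ast}$ are all sound. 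You correctly isolate the one genuinely nontrivial input, namely the closed-range theorem for adjointable operators on Hilbert $C^{\ast}$-modules (complementability of kernel and range and closedness of $\im T^{\ast}$); everything else is, as you say, routine order-theoretic and spectral bookkeeping in the $C^{\ast}$-algebra $End_{\mathcal{A}}^{\ast}(\mathcal{H})$. The only step worth making explicit if you wrote this out in full is the standard equivalence between positivity of a self-adjoint adjointable operator in the $C^{\ast}$-algebra sense and the condition $\langle Sx,x\rangle\geq 0$ for all $x$, which you use implicitly when passing between algebra-level inequalities and the operator inequalities in the statement.
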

\begin{Def}
	\cite{AB}. We call a sequence $\{\Lambda_{i}\in End_{\mathcal{A}}^{\ast}(\mathcal{H},V_{i}):i\in I \}$ a g-frame in Hilbert $\mathcal{A}$-module $\mathcal{H}$ with respect to $\{V_{i}:i\in I \}$ if there exist two positive constants $C$, $D$, such that for all $x\in\mathcal{H}$, 
	\begin{equation}\label{2}
	C\langle x,x\rangle_{\mathcal{A}}\leq\sum_{i\in I}\langle \Lambda_{i}x,\Lambda_{i}x\rangle_{\mathcal{A}}\leq D\langle x,x\rangle_{\mathcal{A}}.
	\end{equation}
	The numbers $C$ and $D$ are called lower and upper bounds of the g-frame, respectively. If $C=D=\lambda$, the g-frame is $\lambda$-tight. If $C = D = 1$, it is called a g-Parseval frame. If the sum in the middle of \eqref{2} is convergent in norm, the g-frame is called standard.
\end{Def}
\begin{Def}
	\cite{Ali}. Let $ \mathcal{H} $ be a Hilbert $\mathcal{A}$-module over a unital $C^{\ast}$-algebra. A family $\{x_{i}\}_{i\in I}$ of elements of $\mathcal{H}$ is an $\ast$-frame for $ \mathcal{H} $, if there	exist strictly nonzero elements $A$, $B$ in $\mathcal{A}$, such that for all $x\in\mathcal{H}$,
	\begin{equation}\label{3}
	A\langle x,x\rangle_{\mathcal{A}} A^{\ast}\leq\sum_{i\in I}\langle x,x_{i}\rangle_{\mathcal{A}}\langle x_{i},x\rangle_{\mathcal{A}}\leq B\langle x,x\rangle_{\mathcal{A}} B^{\ast}.
	\end{equation}
	The elements $A$ and $B$ are called lower and upper bounds of the $\ast$-frame, respectively. If $A=B=\lambda_{1}$, the $\ast$-frame is $\lambda_{1}$-tight. If $A = B = 1$, it is called a normalized tight $\ast$-frame or a Parseval $\ast$-frame. If the sum in the middle of \eqref{3} is convergent in norm, the $\ast$-frame is called standard.
\end{Def}
\begin{Def}
	\cite{A}. We call a sequence $\{\Lambda_{i}\in End_{\mathcal{A}}^{\ast}(\mathcal{H},V_{i}):i\in I \}$ an $\ast$-g-frame in Hilbert $\mathcal{A}$-module $\mathcal{H}$ over a unital $C^{\ast}$-algebra with respect to $\{V_{i}:i\in I \}$ if there exist strictly nonzero elements $A$, $B$ in $\mathcal{A}$, such that for all $x\in\mathcal{H}$, 
	\begin{equation}\label{4}
	A\langle x,x\rangle_{\mathcal{A}} A^{\ast}\leq\sum_{i\in I}\langle \Lambda_{i}x,\Lambda_{i}x\rangle_{\mathcal{A}}\leq B\langle x,x\rangle_{\mathcal{A}} B^{\ast}.
	\end{equation}
	The elements $A$ and $B$ are called lower and upper bounds of the $\ast$-g-frame, respectively. If $A=B=\lambda_{1}$, the $\ast$-g-frame is $\lambda_{1}$-tight. If $A= B = 1$, it is called an $\ast$-g-Parseval frame. If the sum in the middle of \eqref{4} is convergent in norm, the $\ast$-g-frame is called standard.
\end{Def}
\begin{Def} \cite{Xiang}
	Let $K\in End_{\mathcal{A}}^{\ast}(\mathcal{H})$ and $\Lambda_{i}\in End_{\mathcal{A}}^{\ast}(\mathcal{H},V_{i})$ for all $i\in I$, then $\{\Lambda_{i}\}_{i\in I}$ is said to be a $K$-g-frame for $\mathcal{H}$ with respect to $\{V_{i}\}_{i\in I}$ if there exist two constants $C, D > 0$ such that
	\begin{equation}
	C\langle K^{\ast}x,K^{\ast}x\rangle_{\mathcal{A}}\leq\sum_{i\in I}\langle \Lambda_{i}x,\Lambda_{i}x\rangle_{\mathcal{A}}\leq D\langle x,x\rangle_{\mathcal{A}}, \forall x\in\mathcal{H}.
	\end{equation}
	The numbers $C$ and $D$ are called $K$-g-frame bounds. Particularly, if 
	\begin{equation}
	C\langle K^{\ast}x,K^{\ast}x\rangle =\sum_{i\in I}\langle \Lambda_{i}x,\Lambda_{i}x\rangle, \forall x\in\mathcal{H}.
	\end{equation}
	The $K$-g-frame is $C$-tight.
\end{Def}
\begin{Def} \cite{Dast}
	Let $K\in End_{\mathcal{A}}^{\ast}(\mathcal{H})$. A family $\{x_{i}\}_{i\in I}$ of elements of $\mathcal{H}$ is an $\ast$-K-frame for $ \mathcal{H} $, if there	exist strictly nonzero elements $A$, $B$ in $\mathcal{A}$, such that for all $x\in\mathcal{H}$,
	\begin{equation}
	A\langle K^{\ast}x,K^{\ast}x\rangle_{\mathcal{A}}A^{\ast}\leq\sum_{i\in I}\langle x,x_{i}\rangle_{\mathcal{A}}\langle x_{i},x\rangle_{\mathcal{A}}\leq B\langle x,x\rangle_{\mathcal{A}}B^{\ast}.
	\end{equation}
	The elements $A$ and $B$ are called lower and upper bound of the $\ast$-$K$-frame, respectively.
\end{Def}
\begin{Def} \label{our frame} \cite{Ros2}
	Let $K\in End_{\mathcal{A}}^{\ast}(\mathcal{H})$. We call a sequence $\{\Lambda_{i}\in End_{A}^{\ast}(\mathcal{H},\mathcal{H}_{i}):i\in I \}$ a $\ast$-K-g-frame in Hilbert $\mathcal{A}$-module $\mathcal{H}$ with respect to $\{\mathcal{H}_{i}:i\in I \}$ if there exist strictly nonzero elements $A$, $B$ in $\mathcal{A}$ such that 
	\begin{equation}
	A\langle K^{\ast}x,K^{\ast}x\rangle_{\mathcal{A}} A^{\ast}\leq\sum_{i\in I}\langle \Lambda_{i}x,\Lambda_{i}x\rangle_{\mathcal{A}}\leq B\langle x,x\rangle_{\mathcal{A}} B^{\ast}, \forall x\in\mathcal{H}.
	\end{equation}
	The numbers $A$ and $B$ are called lower and upper bounds of the $\ast$-K-g-frame, respectively. If 
	\begin{equation}
	A\langle K^{\ast}x,K^{\ast}x\rangle A^{\ast}=\sum_{i\in I}\langle \Lambda_{i}x,\Lambda_{i}x\rangle, \forall x\in\mathcal{H}.
	\end{equation}
	The $\ast$-K-g-frame is $A$-tight.
\end{Def}
\section{some results for generalized frames in Hilbert $\mathcal{A}$-modules}
We begin this section with the follwing Theorem.
\begin{Th}
Let $ \mathcal{H} $ be a Hilbert $\mathcal{A}$-module over a commutative $C^{\ast}$-algebra. A family $\{x_{i}\}_{i\in I}$ of elements of $\mathcal{H}$ is an $\ast$-frame for $ \mathcal{H} $, if and only if there	exist strictly positive elements $A$ and $B$ in $\mathcal{A}$, such that for all $x\in\mathcal{H}$,
\begin{equation}
A\langle x,x\rangle_{\mathcal{A}}\leq\sum_{i\in I}\langle x,x_{i}\rangle_{\mathcal{A}}\langle x_{i},x\rangle_{\mathcal{A}}\leq B\langle x,x\rangle_{\mathcal{A}}.
\end{equation}
\end{Th}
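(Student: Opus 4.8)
The plan is to exploit the commutativity of $\mathcal{A}$ to move the adjoint factors in the $\ast$-frame inequality onto the same side of $\langle x,x\rangle_{\mathcal{A}}$, and then to translate between the two forms using the product $AA^{\ast}$ in one direction and the positive square root in the other. Since in each direction I am merely rewriting one and the same element of $\mathcal{A}$ in two ways, the order relation is preserved automatically and no genuinely new estimate is required.

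For the forward implication, I would start from the definition of an $\ast$-frame: there exist strictly nonzero $A,B\in\mathcal{A}$ with
\[
A\langle x,x\rangle_{\mathcal{A}}A^{\ast}\leq\sum_{i\in I}\langle x,x_{i}\rangle_{\mathcal{A}}\langle x_{i},x\rangle_{\mathcal{A}}\leq B\langle x,x\rangle_{\mathcal{A}}B^{\ast}.
\]
Because $\mathcal{A}$ is commutative, every element is normal and the three factors $A$, $\langle x,x\rangle_{\mathcal{A}}$, $A^{\ast}$ commute, so that $A\langle x,x\rangle_{\mathcal{A}}A^{\ast}=AA^{\ast}\langle x,x\rangle_{\mathcal{A}}$, and likewise $B\langle x,x\rangle_{\mathcal{A}}B^{\ast}=BB^{\ast}\langle x,x\rangle_{\mathcal{A}}$. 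Setting $A':=AA^{\ast}$ and $B':=BB^{\ast}$, these are positive (and strictly positive since $A,B$ are strictly nonzero), and the same inequality now reads $A'\langle x,x\rangle_{\mathcal{A}}\leq\sum_{i\in I}\langle x,x_{i}\rangle_{\mathcal{A}}\langle x_{i},x\rangle_{\mathcal{A}}\leq B'\langle x,x\rangle_{\mathcal{A}}$, which is exactly the desired conclusion.

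For the converse, I would take strictly positive $A,B$ satisfying $A\langle x,x\rangle_{\mathcal{A}}\leq\sum_{i\in I}\langle x,x_{i}\rangle_{\mathcal{A}}\langle x_{i},x\rangle_{\mathcal{A}}\leq B\langle x,x\rangle_{\mathcal{A}}$, and use the continuous functional calculus to form the positive square roots $A^{\frac12}$ and $B^{\frac12}$, which are self-adjoint and again strictly positive, hence strictly nonzero. Commutativity then gives $A\langle x,x\rangle_{\mathcal{A}}=A^{\frac12}A^{\frac12}\langle x,x\rangle_{\mathcal{A}}=A^{\frac12}\langle x,x\rangle_{\mathcal{A}}(A^{\frac12})^{\ast}$, and similarly for $B$; substituting these expressions recovers the $\ast$-frame inequality \eqref{3} with bounds $A^{\frac12}$ and $B^{\frac12}$.

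The only point requiring care — and the one I expect to be the main, if minor, obstacle — is the bookkeeping around the two positivity notions: I must check that $AA^{\ast}$ is strictly positive whenever $A$ is strictly nonzero, and conversely that the square root of a strictly positive element is strictly nonzero, so that the bounds produced really are of the type demanded by each definition. Both facts follow from the commutative functional calculus, most transparently by passing to the Gelfand picture $\mathcal{A}\cong C_{0}(X)$, where the manipulations above reduce to pointwise identities among nonnegative functions.
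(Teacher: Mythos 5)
Your proposal is correct and follows essentially the same route as the paper: commutativity to collect the factors $A,A^{\ast}$ on one side of $\langle x,x\rangle_{\mathcal{A}}$, the substitution $A\mapsto AA^{\ast}$ in the forward direction, and a factorization $A=aa^{\ast}$ in the converse (the paper merely asserts the existence of such an $a$, whereas you take $a=A^{\frac12}$ explicitly and also spell out the positivity bookkeeping that the paper leaves implicit). No substantive difference.
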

\begin{proof}
	Let $\{x_{i}\}_{i\in I}$ be an $\ast$-frame for $ \mathcal{H} $, then there	exist strictly nonzero elements $a$ and $b$ in $\mathcal{A}$, such that for all $x\in\mathcal{H}$,
	\begin{equation*}
	a\langle x,x\rangle_{\mathcal{A}}a^{\ast}\leq\sum_{i\in I}\langle x,x_{i}\rangle_{\mathcal{A}}\langle x_{i},x\rangle_{\mathcal{A}}\leq b\langle x,x\rangle_{\mathcal{A}}b^{\ast}.
	\end{equation*}
	By commutativity we have
	\begin{equation*}
	aa^{\ast}\langle x,x\rangle_{\mathcal{A}}\leq\sum_{i\in I}\langle x,x_{i}\rangle_{\mathcal{A}}\langle x_{i},x\rangle_{\mathcal{A}}\leq bb^{\ast}\langle x,x\rangle_{\mathcal{A}}.
	\end{equation*}
We pose $A=aa^{\ast}$ and $B=bb^{\ast}$, then $A$ and $B$ are strictly positive elements in $\mathcal{A}$.
Hence 
\begin{equation*}
A\langle x,x\rangle_{\mathcal{A}}\leq\sum_{i\in I}\langle x,x_{i}\rangle_{\mathcal{A}}\langle x_{i},x\rangle_{\mathcal{A}}\leq B\langle x,x\rangle_{\mathcal{A}}.
\end{equation*}
Conversely, let $A$ and $B$ are strictly positive elements in $\mathcal{A}$, such that for all $x\in\mathcal{H}$
\begin{equation*}
A\langle x,x\rangle_{\mathcal{A}}\leq\sum_{i\in I}\langle x,x_{i}\rangle_{\mathcal{A}}\langle x_{i},x\rangle_{\mathcal{A}}\leq B\langle x,x\rangle_{\mathcal{A}}.
\end{equation*}
$A$ and $B$ are strictly positive elements in $\mathcal{A}$ then there exist $a$ and $b$ in $\mathcal{A}$ such that $A=aa^{\ast}$ and $B=bb^{\ast}$.

So
\begin{equation*}
aa^{\ast}\langle x,x\rangle_{\mathcal{A}}\leq\sum_{i\in I}\langle x,x_{i}\rangle_{\mathcal{A}}\langle x_{i},x\rangle_{\mathcal{A}}\leq bb^{\ast}\langle x,x\rangle_{\mathcal{A}}.
\end{equation*}
By commutativity we have
\begin{equation*}
a\langle x,x\rangle_{\mathcal{A}}a^{\ast}\leq\sum_{i\in I}\langle x,x_{i}\rangle_{\mathcal{A}}\langle x_{i},x\rangle_{\mathcal{A}}\leq b\langle x,x\rangle_{\mathcal{A}}b^{\ast}.
\end{equation*}
Therefore $\{x_{i}\}_{i\in I}$ is an $\ast$-frame for $ \mathcal{H} $.
\end{proof}
\begin{Cor}
A sequence $\{\Lambda_{i}\in End_{\mathcal{A}}^{\ast}(\mathcal{H},V_{i}):i\in I \}$ is an $\ast$-g-frame in Hilbert $\mathcal{A}$-module $\mathcal{H}$ over a commutative $C^{\ast}$-algebra with respect to $\{V_{i}:i\in I \}$ if and only if there	exist strictly positive elements $A$, $B$ in $\mathcal{A}$, such that for all $x\in\mathcal{H}$,
\begin{equation}
A\langle x,x\rangle_{\mathcal{A}}\leq\sum_{i\in I}\langle \Lambda_{i}x,\Lambda_{i}x\rangle_{\mathcal{A}}\leq B\langle x,x\rangle_{\mathcal{A}}.
\end{equation}
\end{Cor}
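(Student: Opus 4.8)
The plan is to transcribe the proof of the preceding Theorem almost verbatim, since the passage from the $\ast$-frame inequality \eqref{3} to the $\ast$-g-frame inequality \eqref{4} changes only the middle sum, and that sum is never manipulated in the argument. For the forward implication I would begin from the definition of an $\ast$-g-frame: there exist strictly nonzero $a,b\in\mathcal{A}$ with
\[
a\langle x,x\rangle_{\mathcal{A}}a^{\ast}\leq\sum_{i\in I}\langle \Lambda_{i}x,\Lambda_{i}x\rangle_{\mathcal{A}}\leq b\langle x,x\rangle_{\mathcal{A}}b^{\ast}
\]
for every $x\in\mathcal{H}$. Using the commutativity of $\mathcal{A}$ I would rewrite $a\langle x,x\rangle_{\mathcal{A}}a^{\ast}=aa^{\ast}\langle x,x\rangle_{\mathcal{A}}$ and $b\langle x,x\rangle_{\mathcal{A}}b^{\ast}=bb^{\ast}\langle x,x\rangle_{\mathcal{A}}$, then set $A=aa^{\ast}$ and $B=bb^{\ast}$. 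These are positive, and strictly positive because $a,b$ are strictly nonzero, which yields the two-sided bound in the statement.

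For the converse I would start from strictly positive $A,B\in\mathcal{A}$ and invoke the existence of a positive square root in a $C^{\ast}$-algebra, putting $a=A^{\frac{1}{2}}$ and $b=B^{\frac{1}{2}}$. Then $a,b$ are self-adjoint and strictly nonzero, with $A=aa^{\ast}$ and $B=bb^{\ast}$, so the hypothesis reads
\[
aa^{\ast}\langle x,x\rangle_{\mathcal{A}}\leq\sum_{i\in I}\langle \Lambda_{i}x,\Lambda_{i}x\rangle_{\mathcal{A}}\leq bb^{\ast}\langle x,x\rangle_{\mathcal{A}}.
\]
Reversing the commutativity rearrangement turns $aa^{\ast}\langle x,x\rangle_{\mathcal{A}}$ back into $a\langle x,x\rangle_{\mathcal{A}}a^{\ast}$, and likewise for the upper bound, which recovers \eqref{4} and shows that $\{\Lambda_{i}\}_{i\in I}$ is an $\ast$-g-frame.

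The only point that needs care, and the only place the hypotheses genuinely enter, is the identity $a\,c\,a^{\ast}=aa^{\ast}c$ for all $c\in\mathcal{A}$, which holds precisely because $\mathcal{A}$ is commutative; over a noncommutative algebra this step fails and the equivalence breaks down. One should also check the bookkeeping between ``strictly nonzero'' and ``strictly positive'' under the factorization $A=aa^{\ast}$, but this is routine. Since the middle term $\sum_{i\in I}\langle \Lambda_{i}x,\Lambda_{i}x\rangle_{\mathcal{A}}$ is carried along untouched, no convergence or operator-norm estimate beyond what is already available is required, and the Corollary follows immediately from the Theorem's argument.
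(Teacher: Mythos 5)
Your proposal is correct and follows exactly the route the paper intends for this Corollary: it is the proof of the preceding Theorem transcribed with the middle term $\sum_{i\in I}\langle \Lambda_{i}x,\Lambda_{i}x\rangle_{\mathcal{A}}$ in place of $\sum_{i\in I}\langle x,x_{i}\rangle_{\mathcal{A}}\langle x_{i},x\rangle_{\mathcal{A}}$, using commutativity to move $a$ and $a^{\ast}$ together and the factorization $A=aa^{\ast}$ (your explicit choice $a=A^{1/2}$ is a harmless refinement of the paper's unspecified factorization). No further comment is needed.
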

\begin{Cor}
Let $K\in End_{\mathcal{A}}^{\ast}(\mathcal{H})$. A family $\{x_{i}\}_{i\in I}$ of elements of $\mathcal{H}$ is an $\ast$-K-frame for $ \mathcal{H} $, if and only if there	exist strictly positive elements $A$, $B$ in $\mathcal{A}$, such that for all $x\in\mathcal{H}$,
\begin{equation}
A\langle K^{\ast}x,K^{\ast}x\rangle_{\mathcal{A}}\leq\sum_{i\in I}\langle x,x_{i}\rangle_{\mathcal{A}}\langle x_{i},x\rangle_{\mathcal{A}}\leq B\langle x,x\rangle_{\mathcal{A}}.
\end{equation}	
\end{Cor}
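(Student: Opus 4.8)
The plan is to mirror the proof of the preceding Theorem almost verbatim, the only change being that in the lower estimate the inner product $\langle x,x\rangle_{\mathcal{A}}$ is replaced by $\langle K^{\ast}x,K^{\ast}x\rangle_{\mathcal{A}}$. Since every manipulation merely rearranges the order of factors lying in the coefficient algebra $\mathcal{A}$, the argument is insensitive to which positive element of $\mathcal{A}$ happens to sit between the coefficients, so the substitution costs nothing.

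For the forward implication I would suppose $\{x_{i}\}_{i\in I}$ is an $\ast$-$K$-frame in the sense of the Definition, with strictly nonzero bounds $a,b\in\mathcal{A}$, so that
\[
a\langle K^{\ast}x,K^{\ast}x\rangle_{\mathcal{A}}a^{\ast}\le \sum_{i\in I}\langle x,x_{i}\rangle_{\mathcal{A}}\langle x_{i},x\rangle_{\mathcal{A}}\le b\langle x,x\rangle_{\mathcal{A}}b^{\ast}.
\]
Because $\mathcal{A}$ is commutative, $a^{\ast}$ commutes with $\langle K^{\ast}x,K^{\ast}x\rangle_{\mathcal{A}}$ and $b^{\ast}$ commutes with $\langle x,x\rangle_{\mathcal{A}}$, so the outer terms become $aa^{\ast}\langle K^{\ast}x,K^{\ast}x\rangle_{\mathcal{A}}$ and $bb^{\ast}\langle x,x\rangle_{\mathcal{A}}$. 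Putting $A=aa^{\ast}$ and $B=bb^{\ast}$ yields strictly positive elements of $\mathcal{A}$ realizing the asserted two-sided bound.

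For the converse I would start from strictly positive $A,B$ satisfying the displayed inequality. Every positive element of a $C^{\ast}$-algebra admits a positive square root, so I set $a=A^{1/2}$ and $b=B^{1/2}$, which are self-adjoint with $A=aa^{\ast}$ and $B=bb^{\ast}$. Reinserting these and once more using commutativity to slide $a^{\ast}$ and $b^{\ast}$ back across the $\mathcal{A}$-valued inner products recovers
\[
a\langle K^{\ast}x,K^{\ast}x\rangle_{\mathcal{A}}a^{\ast}\le \sum_{i\in I}\langle x,x_{i}\rangle_{\mathcal{A}}\langle x_{i},x\rangle_{\mathcal{A}}\le b\langle x,x\rangle_{\mathcal{A}}b^{\ast},
\]
which is exactly the defining inequality of an $\ast$-$K$-frame.

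The only point needing care — and the closest thing to an obstacle — is the bookkeeping between \emph{strictly nonzero} (as used in the Definition) and \emph{strictly positive} (as used in the statement). Going forward, $aa^{\ast}$ is positive and, since $\|aa^{\ast}\|=\|a\|^{2}\neq0$, nonzero, hence strictly positive; going backward, the positive square root of a nonzero positive $A$ is again nonzero, hence strictly nonzero. Apart from this, commutativity of $\mathcal{A}$ is the one structural hypothesis genuinely used, since it is what permits $a^{\ast}$ (respectively $b^{\ast}$) to be moved past the inner product; everything else is a direct transcription of the Theorem's argument.
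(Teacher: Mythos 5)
Your proposal is correct and follows exactly the route the paper intends: this corollary is stated without proof precisely because it is the preceding Theorem's argument with $\langle x,x\rangle_{\mathcal{A}}$ replaced by $\langle K^{\ast}x,K^{\ast}x\rangle_{\mathcal{A}}$ in the lower bound, and your transcription (the commutativity step to slide $a^{\ast}$, $b^{\ast}$ across the inner products, then $A=aa^{\ast}$, $B=bb^{\ast}$ forward and square roots backward) is faithful to it. The one caveat --- shared with the paper's own proof of the Theorem, so it is not a divergence from the paper's approach --- is that your inference \emph{positive and nonzero, hence strictly positive} is not valid in a general commutative $C^{\ast}$-algebra (e.g.\ $f(t)=t$ in $C([0,1])$ is positive and nonzero but not strictly positive), and you are right that the commutativity hypothesis, though omitted from the corollary's statement, is genuinely needed.
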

\begin{Cor}
	Let $K\in End_{\mathcal{A}}^{\ast}(\mathcal{H})$. A sequence $\{\Lambda_{i}\in End_{A}^{\ast}(\mathcal{H},\mathcal{H}_{i}):i\in I \}$ is an $\ast$-K-g-frame in Hilbert $\mathcal{A}$-module $\mathcal{H}$ over a commutative $C^{\ast}$-algebra, with respect to $\{\mathcal{H}_{i}:i\in I \}$ if and only if there exist strictly positive elements $A$, $B$ in $\mathcal{A}$, such that for all $x\in\mathcal{H}$, 
\begin{equation}
A\langle K^{\ast}x,K^{\ast}x\rangle_{\mathcal{A}}\leq\sum_{i\in I}\langle \Lambda_{i}x,\Lambda_{i}x\rangle_{\mathcal{A}}\leq B\langle x,x\rangle_{\mathcal{A}}, \forall x\in\mathcal{H}.
\end{equation}	
\end{Cor}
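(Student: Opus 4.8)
The plan is to reproduce, in a $K$-twisted and $g$-version, exactly the mechanism used for the Theorem and the two preceding Corollaries: transport the two frame inequalities across the commutativity of $\mathcal{A}$, and exploit the fact that in a $C^{\ast}$-algebra a strictly positive element factors as $aa^{\ast}$. The governing definition is Definition \ref{our frame}, and the single algebraic identity I would lean on is that, in a commutative $C^{\ast}$-algebra, $a\langle y,y\rangle_{\mathcal{A}}a^{\ast}=aa^{\ast}\langle y,y\rangle_{\mathcal{A}}$ for every $a\in\mathcal{A}$ and $y\in\mathcal{H}$.

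For the forward implication I would assume $\{\Lambda_{i}\}_{i\in I}$ is an $\ast$-$K$-g-frame, so by Definition \ref{our frame} there are strictly nonzero $a,b\in\mathcal{A}$ with
\[
a\langle K^{\ast}x,K^{\ast}x\rangle_{\mathcal{A}}a^{\ast}\leq\sum_{i\in I}\langle \Lambda_{i}x,\Lambda_{i}x\rangle_{\mathcal{A}}\leq b\langle x,x\rangle_{\mathcal{A}}b^{\ast}
\]
for all $x\in\mathcal{H}$. Using commutativity I would rewrite $a\langle K^{\ast}x,K^{\ast}x\rangle_{\mathcal{A}}a^{\ast}=aa^{\ast}\langle K^{\ast}x,K^{\ast}x\rangle_{\mathcal{A}}$ and $b\langle x,x\rangle_{\mathcal{A}}b^{\ast}=bb^{\ast}\langle x,x\rangle_{\mathcal{A}}$, then set $A:=aa^{\ast}$ and $B:=bb^{\ast}$. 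Since $a,b$ are strictly nonzero, $A$ and $B$ are strictly positive, and the desired double inequality follows at once.

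For the converse I would start from strictly positive $A,B$ satisfying the stated inequality and invoke the existence of a square root: putting $a:=A^{1/2}$ and $b:=B^{1/2}$, these are self-adjoint and strictly nonzero with $A=aa^{\ast}$ and $B=bb^{\ast}$. Substituting these factorisations and again using commutativity to pass from $aa^{\ast}\langle K^{\ast}x,K^{\ast}x\rangle_{\mathcal{A}}$ back to $a\langle K^{\ast}x,K^{\ast}x\rangle_{\mathcal{A}}a^{\ast}$ (and similarly for $b$) recovers precisely the inequality of Definition \ref{our frame}, so $\{\Lambda_{i}\}_{i\in I}$ is an $\ast$-$K$-g-frame. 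This is the same $A$-tight/bounds bookkeeping already carried out for the $\ast$-frame case, with $\langle x,x\rangle_{\mathcal{A}}$ in the lower bound replaced by $\langle K^{\ast}x,K^{\ast}x\rangle_{\mathcal{A}}$ and $\langle x,x_{i}\rangle_{\mathcal{A}}\langle x_{i},x\rangle_{\mathcal{A}}$ replaced by $\langle \Lambda_{i}x,\Lambda_{i}x\rangle_{\mathcal{A}}$.

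The only point needing genuine care, and hence the step I expect to be the main (though modest) obstacle, is the order-theoretic one: I must verify that replacing $a\langle K^{\ast}x,K^{\ast}x\rangle_{\mathcal{A}}a^{\ast}$ by $aa^{\ast}\langle K^{\ast}x,K^{\ast}x\rangle_{\mathcal{A}}$ truly preserves the relation $\leq$ in $\mathcal{A}$, and that the square root produced in the converse is strictly nonzero rather than merely positive, so that the bounds of Definition \ref{our frame} are actually met. In the commutative setting this is transparent via the Gelfand representation, where both inequalities become pointwise inequalities of continuous functions and all the rearrangements are immediate; so the remaining work is entirely routine.
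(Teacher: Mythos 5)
Your proposal is correct and follows essentially the same route the paper intends: the corollary is proved exactly as the preceding Theorem, using commutativity of $\mathcal{A}$ to pass between $a\langle\cdot,\cdot\rangle_{\mathcal{A}}a^{\ast}$ and $aa^{\ast}\langle\cdot,\cdot\rangle_{\mathcal{A}}$, together with the factorisation of a strictly positive element as $aa^{\ast}$ (via the square root) for the converse. No substantive difference from the paper's argument.
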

For a sequence of Hilbert $\mathcal{A}$-module  $\{\mathcal{K}_{i}\}_{i\in I}$, the space $\oplus_{i\in I}\mathcal{K}_{i}$ is a Hilbert $\mathcal{A}$-module with the inner product
\begin{equation*}
\Big\langle\{x_{i}\}_{i\in I}, \{y_{i}\}_{i\in I} \Big\rangle=\sum_{i\in I}\langle x_{i}, y_{i}\rangle_{\mathcal{A}},
\end{equation*}

\begin{Prop}
	Let $\mathcal{K}=\oplus_{i\in I}\mathcal{K}_{i}$. 
	\begin{enumerate}
		\item 	The sequence $\{\Lambda_{i}\in End_{\mathcal{A}}^{\ast}(\mathcal{H, K}_{i}) : i\in I\}$ is a g-frame for $\mathcal{H}$ with respect to $\{\mathcal{K}_{i}\}_{i\in I}$ if and only if the sequence $\{\tilde{\Lambda_{i}}\in End_{\mathcal{A}}^{\ast}(\mathcal{H, K}) : i\in I\}$ is a g-frame for $\mathcal{H}$ with respect to $\mathcal{K}$, with $\tilde{\Lambda_{i}}x=(..., 0, 0, \Lambda_{i}x, 0, 0,...), \forall x\in\mathcal{H}$.
		\item The sequence $\{\Lambda_{i}\in End_{\mathcal{A}}^{\ast}(\mathcal{H, K}_{i}) : i\in I\}$ is an $\ast$-g-frame for $\mathcal{H}$ with respect to $\{\mathcal{K}_{i}\}_{i\in I}$ if and only if the sequence $\{\tilde{\Lambda_{i}}\in End_{\mathcal{A}}^{\ast}(\mathcal{H, K}) : i\in I\}$ is an $\ast$-g-frame for $\mathcal{H}$ with respect to $\mathcal{K}$, with $\tilde{\Lambda_{i}}x=(..., 0, 0, \Lambda_{i}x, 0, 0,...), \forall x\in\mathcal{H}$.
		\item For $K\in End_{\mathcal{A}}^{\ast}(\mathcal{H})$, the sequence $\{\Lambda_{i}\in End_{\mathcal{A}}^{\ast}(\mathcal{H, K}_{i}) : i\in I\}$ is a $K$-g-frame for $\mathcal{H}$ with respect to $\{\mathcal{K}_{i}\}_{i\in I}$ if and only if the sequence $\{\tilde{\Lambda_{i}}\in End_{\mathcal{A}}^{\ast}(\mathcal{H, K}) : i\in I\}$ is a $K$-g-frame for $\mathcal{H}$ with respect to $\mathcal{K}$, with $\tilde{\Lambda_{i}}x=(..., 0, 0, \Lambda_{i}x, 0, 0,...), \forall x\in\mathcal{H}$.
		\item For $K\in End_{\mathcal{A}}^{\ast}(\mathcal{H})$, the sequence $\{\Lambda_{i}\in End_{\mathcal{A}}^{\ast}(\mathcal{H, K}_{i}) : i\in I\}$ is an $\ast$-$K$-g-frame for $\mathcal{H}$ with respect to $\{\mathcal{K}_{i}\}_{i\in I}$ if and only if the sequence $\{\tilde{\Lambda_{i}}\in End_{\mathcal{A}}^{\ast}(\mathcal{H, K}) : i\in I\}$ is an $\ast$-$K$-g-frame for $\mathcal{H}$ with respect to $\mathcal{K}$, with $\tilde{\Lambda_{i}}x=(..., 0, 0, \Lambda_{i}x, 0, 0,...), \forall x\in\mathcal{H}$.
\end{enumerate}
\end{Prop}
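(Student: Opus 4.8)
The plan is to reduce all four equivalences to a single norm-preserving identity, after which each becomes immediate since the four frame definitions differ only in the surrounding inequality, never in the central summand $\sum_{i}\langle\Lambda_i x,\Lambda_i x\rangle_{\mathcal{A}}$.

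First I would verify that each $\tilde{\Lambda_i}$ genuinely belongs to $End_{\mathcal{A}}^{\ast}(\mathcal{H},\mathcal{K})$. Since $\tilde{\Lambda_i}$ places $\Lambda_i x$ in the $i$-th coordinate of $\mathcal{K}=\oplus_{k\in I}\mathcal{K}_k$ and zero elsewhere, a direct computation with the direct-sum inner product gives, for every $z=\{z_k\}_{k\in I}\in\mathcal{K}$,
\begin{equation*}
\langle\tilde{\Lambda_i}x,z\rangle_{\mathcal{A}}=\langle\Lambda_i x,z_i\rangle_{\mathcal{A}}=\langle x,\Lambda_i^{\ast}z_i\rangle_{\mathcal{A}},
\end{equation*}
so the adjoint is $\tilde{\Lambda_i}^{\ast}z=\Lambda_i^{\ast}z_i$; in particular $\tilde{\Lambda_i}$ is adjointable.

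The key step is the identity
\begin{equation*}
\langle\tilde{\Lambda_i}x,\tilde{\Lambda_i}x\rangle_{\mathcal{A}}=\langle\Lambda_i x,\Lambda_i x\rangle_{\mathcal{A}},\qquad\forall x\in\mathcal{H},
\end{equation*}
which holds because in the vector $\tilde{\Lambda_i}x$ only the $i$-th coordinate is nonzero, so the direct-sum inner product collapses to the single term $\langle\Lambda_i x,\Lambda_i x\rangle_{\mathcal{A}}$. Summing over $i$ yields
\begin{equation*}
\sum_{i\in I}\langle\tilde{\Lambda_i}x,\tilde{\Lambda_i}x\rangle_{\mathcal{A}}=\sum_{i\in I}\langle\Lambda_i x,\Lambda_i x\rangle_{\mathcal{A}},
\end{equation*}
and the two series converge in norm, or fail to, together.

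Finally, I would feed this identity into each of the four definitions. For part (1) the g-frame inequality \eqref{2} written for $\{\tilde{\Lambda_i}\}$ is, after identifying the two middle sums, \emph{verbatim} the g-frame inequality for $\{\Lambda_i\}$; the same holds for the $\ast$-g-frame inequality \eqref{4} in part (2). For part (3) the $K$-g-frame inequality and for part (4) the $\ast$-$K$-g-frame inequality of Definition \ref{our frame} behave identically, since their left- and right-hand bounds do not involve the $\Lambda_i$ at all while their middle terms coincide under the identity above. Thus in every case the frame bounds ($C,D$ or $A,B$) transfer unchanged in both directions, establishing the equivalences. There is essentially no obstacle here; the only points meriting care are the adjointability check above and the remark that norm-convergence of the central series is likewise preserved, both immediate from the coordinate-wise description of $\tilde{\Lambda_i}$.
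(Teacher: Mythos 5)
Your argument is correct. The paper states this Proposition without giving any proof at all, so there is nothing to compare against line by line; but your reduction is clearly the intended one, and it is essentially the only natural argument. The single identity $\langle\tilde{\Lambda_i}x,\tilde{\Lambda_i}x\rangle_{\mathcal{A}}=\langle\Lambda_i x,\Lambda_i x\rangle_{\mathcal{A}}$, which follows immediately from the definition of the inner product on $\oplus_{i\in I}\mathcal{K}_{i}$ applied to a sequence with a single nonzero coordinate, makes the middle terms of all four defining inequalities coincide term by term, and since the outer bounds in each of the four definitions never involve the operators $\Lambda_i$ themselves, all four equivalences follow with the same bounds in both directions. Your two supplementary checks, that $\tilde{\Lambda_i}$ is adjointable with $\tilde{\Lambda_i}^{\ast}z=\Lambda_i^{\ast}z_i$ and that norm convergence of the central series is preserved, are exactly the points a careful write-up should include and which the paper omits entirely. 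No gaps.
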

\section{frame for $End_{\mathcal{A}}^{\ast}(\mathcal{H, K})$}
We begin this section with the follwing definition.
\begin{Def}
	A sequence $\{T_{i}\in End_{\mathcal{A}}^{\ast}(\mathcal{H, K}): i\in I\}$ is said to be a frame for $End_{A}^{\ast}(\mathcal{H, K})$ if there exist $0 < A, B < \infty$ sach that
	\begin{equation}\label{eq3.}
	A\langle T,T\rangle\leq\sum_{i\in I}\langle T,T_{i}\rangle\langle T_{i},T\rangle\leq B\langle T,T\rangle, \forall T\in End_{\mathcal{A}}^{\ast}(\mathcal{H, K}),
	\end{equation}
	where the series converges in the strong operator topology.
\end{Def}
\begin{Ex}
	For $n\in\mathbf{N^{\ast}}$, let $\{T_{i}\}_{i=1}^{n}\subset End_{\mathcal{A}}^{\ast}(\mathcal{H, K})$ such that for all $i=1,...,n$ $T_{i}$ is injective and has a closed range. 
	
	Then for all $i=1,...,n$, we have
	\begin{equation*}
\|(T_{i}^{\ast}T_{i})^{-1}\|^{-1}I_{\mathcal{H}}\leq T_{i}^{\ast}T_{i}\leq\|T_{i}\|^{2}I_{\mathcal{H}}.
	\end{equation*} 
	So 
	\begin{equation*}
	\bigg(\sum_{i=1}^{n}\|(T_{i}^{\ast}T_{i})^{-1}\|^{-1}\bigg)I_{\mathcal{H}}\leq\sum_{i=1}^{n} T_{i}^{\ast}T_{i}\leq\bigg(\sum_{i=1}^{n}\|T_{i}\|^{2}\bigg)I_{\mathcal{H}}.
	\end{equation*} 
	Hence
	\begin{equation*}
	\bigg(\sum_{i=1}^{n}\|(T_{i}^{\ast}T_{i})^{-1}\|^{-1}\bigg)TT^{\ast}\leq\sum_{i=1}^{n}T T_{i}^{\ast}T_{i}T^{\ast}\leq\bigg(\sum_{i=1}^{n}\|T_{i}\|^{2}\bigg)TT^{\ast}, \forall T\in End_{\mathcal{A}}^{\ast}(\mathcal{H, K}).
	\end{equation*} 
	Thus
	\begin{equation*}
\bigg(\sum_{i=1}^{n}\|(T_{i}^{\ast}T_{i})^{-1}\|^{-1}\bigg)\langle T, T\rangle\leq\sum_{i=1}^{n}\langle T, T_{i}\rangle\langle T_{i}, T\rangle\leq\bigg(\sum_{i=1}^{n}\|T_{i}\|^{2}\bigg)\langle T, T\rangle, 
	\end{equation*}
	for all $T\in End_{\mathcal{A}}^{\ast}(\mathcal{H, K})$.
	
	This shows that $\{T_{i}\}_{i=1}^{n}$ is a frame for $End_{\mathcal{A}}^{\ast}(\mathcal{H, K})$.
\end{Ex}
\begin{Ex}
	Let $\{T_{i}\}_{i\in\mathbf{N^{\ast}}}\subset End_{\mathcal{A}}^{\ast}(\mathcal{H, K})$ such that for all $i\in\mathbf{N^{\ast}}$ $T_{i}$ is injective, has a closed range and $\|T_{i}\|\leq\frac{1}{i}$. 
	
	Then for all $i\in\mathbf{N^{\ast}}$, we have
	\begin{equation*}
	\|(T_{i}^{\ast}T_{i})^{-1}\|^{-1}I_{\mathcal{H}}\leq T_{i}^{\ast}T_{i}\leq\|T_{i}\|^{2}I_{\mathcal{H}}.
	\end{equation*} 
	So 
	\begin{equation*}
	\bigg(\sum_{i\in\mathbf{N^{\ast}}}\|(T_{i}^{\ast}T_{i})^{-1}\|^{-1}\bigg)I_{\mathcal{H}}\leq\sum_{i\in\mathbf{N^{\ast}}} T_{i}^{\ast}T_{i}\leq\bigg(\sum_{i\in\mathbf{N^{\ast}}}\|T_{i}\|^{2}\bigg)I_{\mathcal{H}}.
	\end{equation*} 
	Hence
	\begin{equation*}
	\bigg(\sum_{i\in\mathbf{N^{\ast}}}\|(T_{i}^{\ast}T_{i})^{-1}\|^{-1}\bigg)TT^{\ast}\leq\sum_{i\in\mathbf{N^{\ast}}}T T_{i}^{\ast}T_{i}T^{\ast}\leq\bigg(\sum_{i\in\mathbf{N^{\ast}}}\|T_{i}\|^{2}\bigg)TT^{\ast}, \forall T\in End_{\mathcal{A}}^{\ast}(\mathcal{H, K}).
	\end{equation*} 
	Thus
	\begin{equation*}
	\bigg(\sum_{i\in\mathbf{N^{\ast}}}\|(T_{i}^{\ast}T_{i})^{-1}\|^{-1}\bigg)\langle T, T\rangle\leq\sum_{i\in\mathbf{N^{\ast}}}\langle T, T_{i}\rangle\langle T_{i}, T\rangle\leq\bigg(\sum_{i\in\mathbf{N^{\ast}}}\|T_{i}\|^{2}\bigg)\langle T, T\rangle, 
	\end{equation*}
	for all $T\in End_{\mathcal{A}}^{\ast}(\mathcal{H, K})$.
	
	This shows that $\{T_{i}\}_{i\in\mathbf{N^{\ast}}}$ is a frame for $End_{\mathcal{A}}^{\ast}(\mathcal{H, K})$.
\end{Ex}
\begin{Th}
	A sequence $\{T_{i}\in End_{\mathcal{A}}^{\ast}(\mathcal{H, K}): i\in I\}$ is a frame for $End_{\mathcal{A}}^{\ast}(\mathcal{H, K})$ if and only if it is a g-frame for $\mathcal{H}$ with respect to $\mathcal{K}$.
\end{Th}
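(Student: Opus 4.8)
The plan is to collapse both notions onto a single positive module operator and then to compare the resulting order relations. Set $S=\sum_{i\in I}T_i^{\ast}T_i\in End_{\mathcal{A}}^{\ast}(\mathcal{H})$; in each of the two hypotheses the relevant upper bound forces the increasing family of partial sums to be bounded, so $S$ is a well-defined positive element of $End_{\mathcal{A}}^{\ast}(\mathcal{H})$ and the series converges in the strong operator topology. Using $\langle T_i x,T_i x\rangle=\langle x,T_i^{\ast}T_i x\rangle$, the $g$-frame inequality reads exactly
\begin{equation*}
C\, I_{\mathcal{H}}\le S\le D\, I_{\mathcal{H}},
\end{equation*}
while, since $\langle T,T_i\rangle\langle T_i,T\rangle=TT_i^{\ast}T_iT^{\ast}$ and $\langle T,T\rangle=TT^{\ast}$, the defining frame inequality for $End_{\mathcal{A}}^{\ast}(\mathcal{H},\mathcal{K})$ reads exactly
\begin{equation*}
A\, TT^{\ast}\le TST^{\ast}\le B\, TT^{\ast},\qquad \forall\, T\in End_{\mathcal{A}}^{\ast}(\mathcal{H},\mathcal{K}).
\end{equation*}
Thus the theorem reduces to the equivalence of these two order conditions on the single positive operator $S$.

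For the direction \emph{$g$-frame $\Rightarrow$ frame} I would invoke the elementary fact that conjugation preserves order: if $P\le Q$ in $End_{\mathcal{A}}^{\ast}(\mathcal{H})$ then $Q-P=R^{\ast}R$ with $R=(Q-P)^{1/2}$, whence $T(Q-P)T^{\ast}=(RT^{\ast})^{\ast}(RT^{\ast})\ge 0$ and $TPT^{\ast}\le TQT^{\ast}$ for every $T$. Applying this to $C I_{\mathcal{H}}\le S$ and to $S\le D I_{\mathcal{H}}$, and noting $T I_{\mathcal{H}}T^{\ast}=TT^{\ast}$, gives $C\,TT^{\ast}\le TST^{\ast}\le D\,TT^{\ast}$ for all $T$; hence $\{T_i\}$ is a frame for $End_{\mathcal{A}}^{\ast}(\mathcal{H},\mathcal{K})$ with the same bounds $A=C$, $B=D$.

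For the converse I would pair the frame inequality against vectors of $\mathcal{K}$. For $w\in\mathcal{K}$, using $\langle w,TT^{\ast}w\rangle=\langle T^{\ast}w,T^{\ast}w\rangle$ and $\langle w,TST^{\ast}w\rangle=\langle T^{\ast}w,ST^{\ast}w\rangle$, the frame condition yields
\begin{equation*}
A\langle T^{\ast}w,T^{\ast}w\rangle\le\langle T^{\ast}w,ST^{\ast}w\rangle\le B\langle T^{\ast}w,T^{\ast}w\rangle,
\end{equation*}
i.e.\ the $g$-frame inequalities hold for every vector of the form $x=T^{\ast}w$. It then remains to see that such vectors exhaust $\mathcal{H}$. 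I would feed in the adjointable rank-one operators $\theta_{u,v}\colon\mathcal{H}\to\mathcal{K}$, $\theta_{u,v}(x)=\langle x,v\rangle u$, whose adjoint is $\theta_{u,v}^{\ast}(w)=\langle w,u\rangle v$, so that with $T=\theta_{u,v}$ one gets $T^{\ast}w=\langle w,u\rangle v$.

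The main obstacle is precisely this last step: the output $T^{\ast}w=\langle w,u\rangle v$ carries the coefficient $\langle w,u\rangle\in\mathcal{A}$, which in a general Hilbert $C^{\ast}$-module need not be the unit, so one cannot directly realize an arbitrary $x$ as $T^{\ast}w$ and cancel. I would resolve this by choosing $u,w\in\mathcal{K}$ with $\langle w,u\rangle=1_{\mathcal{A}}$ whenever $\mathcal{K}$ admits such a pair, making $T^{\ast}w=v$ arbitrary; failing that, I would argue by density and continuity, observing that the upper inequality already forces $\|S\|\le B$, so $\langle x,Sx\rangle\le B\langle x,x\rangle$ propagates from the submodule of vectors $T^{\ast}w$ to its closure, and using the lower bound (which makes $S$ bounded below, hence the relevant ranges large) to recover $A\langle x,x\rangle\le\langle x,Sx\rangle$ on all of $\mathcal{H}$. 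Once the operator inequality $A I_{\mathcal{H}}\le S\le B I_{\mathcal{H}}$ is in hand, the $g$-frame property with bounds $A,B$ is immediate.
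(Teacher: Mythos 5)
Your proof follows essentially the same route as the paper's: both directions reduce to the single operator inequality $A\,I_{\mathcal H}\le S\le B\,I_{\mathcal H}$ for $S=\sum_{i\in I}T_i^{\ast}T_i$, the forward implication by conjugation (order is preserved under $P\mapsto TPT^{\ast}$) and the converse by pairing the operator inequality against vectors of $\mathcal K$. The one point where you diverge is that you explicitly flag the crux of the converse --- that every $y\in\mathcal H$ must be realized (or approximated) as $T^{\ast}w$ --- which the paper simply asserts without justification; your proposed repair via the rank-one operators $\theta_{u,v}$ together with a density-and-continuity argument is sound, but be aware that it genuinely needs $\mathcal K$ to be full over a unital $\mathcal A$ (or to admit $u,w$ with $\langle w,u\rangle$ invertible), a hypothesis absent from the theorem as stated, so your version makes visible a gap that the paper's own proof papers over.
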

\begin{proof}
Let $\{T_{i}\}_{i\in I}$ be a g-frame for $\mathcal{H}$ with respect to $\mathcal{K}$.

Then there exists two positive constants $A$ and $B$, such that 
\begin{equation*}
A\langle x, x\rangle_{\mathcal{A}}\leq\sum_{i\in I}\langle T_{i}x, T_{i}x\rangle_{\mathcal{A}}\leq B\langle x, x\rangle_{\mathcal{A}}, \forall x\in\mathcal{H},
\end{equation*}
\begin{equation*}\Longleftrightarrow
A\langle x,x\rangle_{\mathcal{A}}\leq\sum_{i\in I}\langle T_{i}^{\ast}T_{i}x,x\rangle_{\mathcal{A}}\leq B\langle x,x\rangle_{\mathcal{A}}, \forall x\in\mathcal{H}.
\end{equation*}
So
\begin{equation*}
AI_{\mathcal{H}}\leq\sum_{i\in I}T_{i}^{\ast}T_{i}\leq BI_{\mathcal{H}}.
\end{equation*}
Hence
\begin{equation*}
ATT^{\ast}\leq\sum_{i\in I}TT_{i}^{\ast}T_{i}T^{\ast}\leq BTT^{\ast}, \forall T\in End_{\mathcal{A}}^{\ast}(\mathcal{H, K}).
\end{equation*}
Thus
\begin{equation*}
A\langle T,T\rangle\leq\sum_{i\in I}\langle T,T_{i}\rangle\langle T_{i},T\rangle\leq B\langle T,T\rangle, \forall T\in End_{\mathcal{A}}^{\ast}(\mathcal{H, K}),
\end{equation*}
i.e $\{T_{i}\}_{i\in I}$ is a frame for $End_{\mathcal{A}}^{\ast}(\mathcal{H, K})$.

Conversely, assume that $\{T_{i}\}_{i\in I}$ be a frame for $End_{\mathcal{A}}^{\ast}(\mathcal{H, K})$.

Then there exists two positive constants $A$ and $B$, such that
\begin{equation*}
A\langle T,T\rangle\leq\sum_{i\in I}\langle T,T_{i}\rangle\langle T_{i},T\rangle\leq B\langle T,T\rangle, \forall T\in End_{\mathcal{A}}^{\ast}(\mathcal{H, K}),
\end{equation*}

\begin{equation*}\Longleftrightarrow
ATT^{\ast}\leq\sum_{i\in I}TT_{i}^{\ast}T_{i}T^{\ast}\leq BTT^{\ast}, \forall T\in End_{\mathcal{A}}^{\ast}(\mathcal{H, K}).
\end{equation*}
So
\begin{equation*}
A\langle TT^{\ast}x,x\rangle_{\mathcal{A}}\leq\sum_{i\in I}\langle TT_{i}^{\ast}T_{i}T^{\ast}x,x\rangle_{\mathcal{A}}\leq B\langle TT^{\ast}x,x\rangle_{\mathcal{A}}, \forall T\in End_{\mathcal{A}}^{\ast}(\mathcal{H, K}), \forall x\in\mathcal{K}.
\end{equation*}
Let $y\in\mathcal{H}$ and $T\in End_{\mathcal{A}}^{\ast}(\mathcal{H, K})$ such that $T^{\ast}x=y$, then
\begin{equation*}
A\langle y,y\rangle_{\mathcal{A}}\leq\sum_{i\in I}\langle T_{i}^{\ast}T_{i}y,y\rangle_{\mathcal{A}}\leq B\langle y,y\rangle_{\mathcal{A}}, \forall y\in\mathcal{H},
\end{equation*}
i.e
\begin{equation*}
A\langle y,y\rangle_{\mathcal{A}}\leq\sum_{i\in I}\langle T_{i}y,T_{i}y\rangle_{\mathcal{A}}\leq B\langle y,y\rangle_{\mathcal{A}}, \forall y\in\mathcal{H},
\end{equation*}
thus  $\{T_{i}\}_{i\in I}$ is a g-frame for $\mathcal{H}$ with respect to $\mathcal{K}$.
\end{proof}
\begin{Cor}
	A sequence $\{T_{i}\in End_{A}^{\ast}(\mathcal{H, K}): i\in I\}$ is a tight frame for $End_{A}^{\ast}(\mathcal{H, K})$ if and only if it is a tight g-frame for $\mathcal{H}$ whit respect to $\mathcal{K}$.
\end{Cor}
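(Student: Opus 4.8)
The plan is to specialize the proof of the preceding Theorem to the case of equal bounds. Recall that in that proof the passage between the g-frame inequality for $\mathcal{H}$ and the frame inequality for $End_{\mathcal{A}}^{\ast}(\mathcal{H,K})$ is carried out through a chain of equivalent reformulations, each of which is in fact a genuine two-sided equivalence at the level of the displayed transformations (the author marks several of them with $\Longleftrightarrow$). I would observe that a tight frame, respectively a tight g-frame, is obtained precisely by taking $A=B$ and replacing every $\leq$ by $=$; so it suffices to check that each link of this chain survives the passage from inequalities to equalities, which it does because none of the transformations used introduces any slack.

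For the forward direction I would start from the tight g-frame identity $A\langle x,x\rangle_{\mathcal{A}}=\sum_{i\in I}\langle T_{i}x,T_{i}x\rangle_{\mathcal{A}}$ for all $x\in\mathcal{H}$, rewrite $\langle T_{i}x,T_{i}x\rangle_{\mathcal{A}}=\langle T_{i}^{\ast}T_{i}x,x\rangle_{\mathcal{A}}$, and use the fact that $\langle Sx,x\rangle_{\mathcal{A}}=\langle S'x,x\rangle_{\mathcal{A}}$ for all $x$ forces $S=S'$ for adjointable operators, to deduce the operator identity $AI_{\mathcal{H}}=\sum_{i\in I}T_{i}^{\ast}T_{i}$. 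Conjugating this identity by an arbitrary $T\in End_{\mathcal{A}}^{\ast}(\mathcal{H,K})$, that is multiplying on the left by $T$ and on the right by $T^{\ast}$, yields $ATT^{\ast}=\sum_{i\in I}TT_{i}^{\ast}T_{i}T^{\ast}$, which is exactly $A\langle T,T\rangle=\sum_{i\in I}\langle T,T_{i}\rangle\langle T_{i},T\rangle$ in the $End_{\mathcal{A}}^{\ast}(\mathcal{K})$-valued inner product. Hence $\{T_{i}\}_{i\in I}$ is a tight frame for $End_{\mathcal{A}}^{\ast}(\mathcal{H,K})$ with the same bound $A$.

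The converse reverses these steps. Starting from the tight frame identity $ATT^{\ast}=\sum_{i\in I}TT_{i}^{\ast}T_{i}T^{\ast}$, I would evaluate both sides at a vector $x\in\mathcal{K}$ to get $A\langle TT^{\ast}x,x\rangle_{\mathcal{A}}=\sum_{i\in I}\langle T_{i}T^{\ast}x,T_{i}T^{\ast}x\rangle_{\mathcal{A}}$, and then, given any $y\in\mathcal{H}$, choose $T$ and $x$ with $T^{\ast}x=y$ to recover $A\langle y,y\rangle_{\mathcal{A}}=\sum_{i\in I}\langle T_{i}y,T_{i}y\rangle_{\mathcal{A}}$, which is the tight g-frame identity. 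The only points needing care are the order-theoretic step that an $\mathcal{A}$-valued quadratic form determines its operator, which is standard in Hilbert $C^{\ast}$-modules, and the existence for each $y\in\mathcal{H}$ of a pair $(T,x)$ with $T^{\ast}x=y$; both are already invoked in the proof of the Theorem, so I do not expect any genuinely new obstacle. The corollary is essentially the equality-case bookkeeping of that argument, and the main thing to keep honest is simply that every inequality in the chain has been replaced consistently by an equality with one and the same constant $A$.
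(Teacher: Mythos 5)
Your proposal is correct and matches the paper's intent: the paper gives no separate proof for this corollary, leaving it as the equality case ($A=B$) of the preceding theorem's argument, which is exactly the specialization you carry out (including reliance on the same unjustified-but-inherited step of choosing $T$ and $x$ with $T^{\ast}x=y$).
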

\begin{Cor}
	A sequence $\{T_{i}\in End_{A}^{\ast}(\mathcal{H, K}): i\in I\}$ is a Bessel sequence for $End_{A}^{\ast}(\mathcal{H, K})$ if and only if it is a g-Bessel sequence for $\mathcal{H}$ whit respect to $\mathcal{K}$.
\end{Cor}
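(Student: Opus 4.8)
The plan is to reuse the argument of the preceding Theorem, keeping only its upper inequality throughout. Indeed, a Bessel sequence for $End_{\mathcal{A}}^{\ast}(\mathcal{H, K})$ is exactly a family satisfying the right-hand inequality of \eqref{eq3.} alone, while a g-Bessel sequence for $\mathcal{H}$ with respect to $\mathcal{K}$ is exactly a family satisfying the right-hand inequality of \eqref{2} alone. So the whole equivalence follows by running the Theorem's chain of implications one inequality at a time, and no new conceptual ingredient is required beyond the one subtlety flagged below.

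For the forward direction I would assume $\{T_{i}\}_{i\in I}$ is a g-Bessel sequence, i.e. there is $B>0$ with $\sum_{i\in I}\langle T_{i}x, T_{i}x\rangle_{\mathcal{A}}\leq B\langle x,x\rangle_{\mathcal{A}}$ for all $x\in\mathcal{H}$. Writing $\langle T_{i}x, T_{i}x\rangle_{\mathcal{A}}=\langle T_{i}^{\ast}T_{i}x, x\rangle_{\mathcal{A}}$ and using that this holds for every $x$, the Bessel bound guarantees that $\sum_{i\in I}T_{i}^{\ast}T_{i}$ converges to an adjointable operator with $\sum_{i\in I}T_{i}^{\ast}T_{i}\leq B\,I_{\mathcal{H}}$. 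Conjugating this positive-operator inequality by an arbitrary $T\in End_{\mathcal{A}}^{\ast}(\mathcal{H, K})$, that is, multiplying on the left by $T$ and on the right by $T^{\ast}$, yields $\sum_{i\in I}T T_{i}^{\ast}T_{i}T^{\ast}\leq B\,T T^{\ast}$, which under the identification $\langle S,R\rangle=SR^{\ast}$ is precisely $\sum_{i\in I}\langle T,T_{i}\rangle\langle T_{i},T\rangle\leq B\langle T,T\rangle$. Hence $\{T_{i}\}$ is Bessel for $End_{\mathcal{A}}^{\ast}(\mathcal{H, K})$.

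For the converse I would reverse the same steps: from $\sum_{i\in I}T T_{i}^{\ast}T_{i}T^{\ast}\leq B\,T T^{\ast}$ I evaluate both positive operators at a vector $x\in\mathcal{K}$ to obtain $\sum_{i\in I}\langle T_{i}T^{\ast}x, T_{i}T^{\ast}x\rangle_{\mathcal{A}}\leq B\langle T^{\ast}x, T^{\ast}x\rangle_{\mathcal{A}}$, and then set $y=T^{\ast}x$ to read off the g-Bessel inequality. The main obstacle is exactly the step the Theorem's proof takes for granted: to conclude the g-Bessel bound for \emph{every} $y\in\mathcal{H}$ one must realize each $y$ as $T^{\ast}x$ for suitable $T$ and $x$. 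I would supply this by fixing $u\in\mathcal{K}$ and taking $T=S^{\ast}$, where $S\colon\mathcal{K}\to\mathcal{H}$ is the rank-one type operator $Sz=\langle z,u\rangle y$; one checks $S$ is adjointable, so $T^{\ast}=S$ and $T^{\ast}u=\langle u,u\rangle\,y$. Plugging $x=u$ and using $\mathcal{A}$-linearity of the $T_{i}$ then gives $\langle u,u\rangle\big(\sum_{i\in I}\langle T_{i}y, T_{i}y\rangle_{\mathcal{A}}\big)\langle u,u\rangle\leq\langle u,u\rangle\big(B\langle y,y\rangle_{\mathcal{A}}\big)\langle u,u\rangle$.

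The delicate point is the cancellation of the factor $\langle u,u\rangle$ on both sides: this is legitimate precisely when $\mathcal{K}$ admits a vector $u$ with $\langle u,u\rangle$ invertible, in which case conjugating by $\langle u,u\rangle^{-1}$ preserves the order and delivers $\sum_{i\in I}\langle T_{i}y, T_{i}y\rangle_{\mathcal{A}}\leq B\langle y,y\rangle_{\mathcal{A}}$ for all $y\in\mathcal{H}$. Apart from furnishing this realizability-and-cancellation argument, which is the only genuine subtlety and is inherited verbatim from the Theorem, every step is a routine restriction of the Theorem's proof to its upper bound, so no new estimate is needed.
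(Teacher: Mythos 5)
Your strategy is exactly the one the paper intends: the corollary is stated without proof and is meant to follow by rerunning the proof of the preceding Theorem with only the upper inequality retained, and your forward direction does this correctly. (One small refinement there: rather than asserting that the g-Bessel bound already forces $\sum_{i\in I}T_{i}^{\ast}T_{i}$ to converge to an adjointable operator, it is safer to conjugate the finite partial-sum inequalities $\sum_{i\in F}T_{i}^{\ast}T_{i}\leq B\,I_{\mathcal{H}}$ by $T$ and then pass to the limit, since the definition only asks for convergence of the middle series in the strong operator topology.)

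The genuine gap is in the converse, and you have located it precisely: one must realize every $y\in\mathcal{H}$, at least up to a controllable factor, as $T^{\ast}x$. Your operator $Sz=\langle z,u\rangle y$ is adjointable and the computation $T^{\ast}u=\langle u,u\rangle y$ with $T=S^{\ast}$ is correct, but the cancellation step requires a vector $u\in\mathcal{K}$ with $\langle u,u\rangle$ invertible in $\mathcal{A}$, and such a vector need not exist: for $\mathcal{K}=\mathcal{A}=C_{0}(X)$ with $X$ locally compact and non-compact, $\langle f,f\rangle=|f|^{2}$ is never invertible. So as written your converse establishes the corollary only under an additional hypothesis on $\mathcal{K}$ that the statement does not assume; what you obtain unconditionally is only $\langle u,u\rangle\big(\sum_{i}\langle T_{i}y,T_{i}y\rangle_{\mathcal{A}}\big)\langle u,u\rangle\leq\langle u,u\rangle\,B\langle y,y\rangle_{\mathcal{A}}\,\langle u,u\rangle$ for all $u\in\mathcal{K}$, from which the desired inequality follows only after a further argument (e.g.\ an approximate-unit argument using fullness of $\mathcal{K}$, i.e.\ density of the ideal generated by $\langle\mathcal{K},\mathcal{K}\rangle$). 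To be fair, the paper's own proof of the parent Theorem commits the same omission more silently --- it simply writes ``let $T$ and $x$ be such that $T^{\ast}x=y$'' with no justification --- so your proposal is more candid about where the difficulty lies, but the gap is flagged rather than closed.
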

\section{generalized frame for $End_{\mathcal{A}}^{\ast}(\mathcal{H, K})$}
We begin this section with the follwing definition.
\begin{Def}
	A sequence $\{T_{i}\in End_{\mathcal{A}}^{\ast}(\mathcal{H, K}): i\in I\}$ is said to be a generalized frame for $End_{\mathcal{A}}^{\ast}(\mathcal{H, K})$ if there exist strictly positive elements $A$, $B$ in $\mathcal{A}$ sach that
	\begin{equation}
	A\langle T,T\rangle\leq\sum_{i\in I}\langle T,T_{i}\rangle\langle T_{i},T\rangle\leq B\langle T,T\rangle, \forall T\in End_{\mathcal{A}}^{\ast}(\mathcal{H, K}),
	\end{equation}
	where the series converges in the strong operator topology.
\end{Def}
In the follwing $\mathcal{A}$ is a commutative $C^{\ast}$-algebra.
\begin{Th}
	A sequence $\{T_{i}\in End_{A}^{\ast}(\mathcal{H, K}): i\in I\}$ is a generalized frame for $End_{A}^{\ast}(\mathcal{H, K})$ if and only if it is an $\ast$-g-frame for $\mathcal{H}$ whit respect to $\mathcal{K}$.
\end{Th}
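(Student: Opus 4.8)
The plan is to run the same argument used for the preceding Theorem (that a frame for $End_{\mathcal{A}}^{\ast}(\mathcal{H,K})$ coincides with a g-frame for $\mathcal{H}$ with respect to $\mathcal{K}$), but with the scalar bounds replaced by strictly positive elements of $\mathcal{A}$. The bridge between the two ``$\ast$'' notions and honest operator inequalities is the Corollary above characterizing $\ast$-g-frames over a commutative $C^{\ast}$-algebra: $\{T_i\}$ is an $\ast$-g-frame for $\mathcal{H}$ with respect to $\mathcal{K}$ if and only if there exist strictly positive $A,B\in\mathcal{A}$ with $A\langle x,x\rangle_{\mathcal{A}}\le\sum_i\langle T_i x,T_i x\rangle_{\mathcal{A}}\le B\langle x,x\rangle_{\mathcal{A}}$ for all $x\in\mathcal{H}$. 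So I would always work with this ``positive sandwich'' form and invoke the $\ast$-g-frame language only at the very ends.

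First I would prove ``$\ast$-g-frame $\Rightarrow$ generalized frame''. Starting from the sandwich inequality I rewrite $\langle T_i x,T_i x\rangle_{\mathcal{A}}=\langle T_i^{\ast}T_i x,x\rangle_{\mathcal{A}}$ and $A\langle x,x\rangle_{\mathcal{A}}=\langle Ax,x\rangle_{\mathcal{A}}$, turning it into the operator inequality $A I_{\mathcal{H}}\le\sum_i T_i^{\ast}T_i\le B I_{\mathcal{H}}$ on $\mathcal{H}$, where $A$ is read as the positive, adjointable multiplication operator $x\mapsto Ax$. Compressing by an arbitrary $T\in End_{\mathcal{A}}^{\ast}(\mathcal{H,K})$, i.e. multiplying on the left by $T$ and on the right by $T^{\ast}$, preserves the order and gives $TAT^{\ast}\le\sum_i TT_i^{\ast}T_iT^{\ast}\le TBT^{\ast}$. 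Commutativity enters here: since each operator is $\mathcal{A}$-linear, $TAT^{\ast}=A\,TT^{\ast}$, and likewise for $B$. Recognizing $TT^{\ast}=\langle T,T\rangle$ and $TT_i^{\ast}T_iT^{\ast}=\langle T,T_i\rangle\langle T_i,T\rangle$ then yields exactly the generalized frame inequality.

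For the converse I would reverse these identities: from $A\langle T,T\rangle\le\sum_i\langle T,T_i\rangle\langle T_i,T\rangle\le B\langle T,T\rangle$ I recover $A\,TT^{\ast}\le\sum_i TT_i^{\ast}T_iT^{\ast}\le B\,TT^{\ast}$, evaluate both sides on an arbitrary $x\in\mathcal{K}$ and pair with $x$, using $\langle A\,TT^{\ast}x,x\rangle_{\mathcal{A}}=A\langle T^{\ast}x,T^{\ast}x\rangle_{\mathcal{A}}$ and $\langle TT_i^{\ast}T_iT^{\ast}x,x\rangle_{\mathcal{A}}=\langle T_i(T^{\ast}x),T_i(T^{\ast}x)\rangle_{\mathcal{A}}$. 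Setting $y=T^{\ast}x$ produces $A\langle y,y\rangle_{\mathcal{A}}\le\sum_i\langle T_i y,T_i y\rangle_{\mathcal{A}}\le B\langle y,y\rangle_{\mathcal{A}}$, and the same Corollary converts this back into the $\ast$-g-frame property.

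The step I expect to be the real obstacle is the one the previous Theorem glosses over in its converse: to conclude the frame inequality for \emph{every} $y\in\mathcal{H}$, I must write an arbitrary $y$ as $T^{\ast}x$ for suitable $x\in\mathcal{K}$ and $T\in End_{\mathcal{A}}^{\ast}(\mathcal{H,K})$. This is where I would be most careful, fixing a suitable $x\in\mathcal{K}$ and building $T$ from a rank-one type (``ket--bra'') adjointable operator so that $T^{\ast}x=y$; this needs $\mathcal{K}$ to carry an element $v$ with $\langle v,x\rangle=1_{\mathcal{A}}$. The remaining commutativity point, namely $TAT^{\ast}=A\,TT^{\ast}$ and the well-definedness of $A$ as a positive self-adjoint multiplier, is routine once one uses the $\mathcal{A}$-linearity of $T$ and the self-adjointness of multiplication by the self-adjoint element $A$.
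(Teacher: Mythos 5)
Your proposal follows essentially the same route as the paper's own proof: reformulate the $\ast$-g-frame condition (via the commutative-case Corollary) as the operator inequality $AI_{\mathcal{H}}\leq\sum_{i}T_{i}^{\ast}T_{i}\leq BI_{\mathcal{H}}$, compress by $T$ and $T^{\ast}$ using commutativity to pull $A$ and $B$ out, and run the converse by evaluating at $x\in\mathcal{K}$ and substituting $y=T^{\ast}x$. You are in fact more careful than the paper at the one delicate point --- justifying that every $y\in\mathcal{H}$ arises as $T^{\ast}x$ (via a rank-one adjointable operator, which requires $\mathcal{K}$ to admit $v,x$ with $\langle v,x\rangle=1_{\mathcal{A}}$) --- a step the paper's proof asserts without argument.
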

\begin{proof}
	Let $\{T_{i}\}_{i\in I}$ be an $\ast$-g-frame for $\mathcal{H}$ with respect to $\mathcal{K}$.
	
	Then there exist strictly positive elements $A$ and $B$ in $\mathcal{A}$, such that 
	\begin{equation*}
	A\langle x, x\rangle_{\mathcal{A}}\leq\sum_{i\in I}\langle T_{i}x, T_{i}x\rangle_{\mathcal{A}}\leq B\langle x, x\rangle_{\mathcal{A}}, \forall x\in\mathcal{H},
	\end{equation*}
	 \begin{equation*}\Longleftrightarrow
	A\langle x,x\rangle_{\mathcal{A}}\leq\sum_{i\in I}\langle T_{i}^{\ast}T_{i}x,x\rangle_{\mathcal{A}}\leq B\langle x,x\rangle_{\mathcal{A}}, \forall x\in\mathcal{H}.
	\end{equation*}
	So
	\begin{equation*}
	AI_{\mathcal{H}}\leq\sum_{i\in I}T_{i}^{\ast}T_{i}\leq BI_{\mathcal{H}}.
	\end{equation*}
	Hence
	\begin{equation*}
	ATT^{\ast}\leq\sum_{i\in I}TT_{i}^{\ast}T_{i}T^{\ast}\leq BTT^{\ast}, \forall T\in End_{\mathcal{A}}^{\ast}(\mathcal{H, K}).
	\end{equation*}
	Thus
	\begin{equation*}
	A\langle T,T\rangle\leq\sum_{i\in I}\langle T,T_{i}\rangle\langle T_{i},T\rangle\leq B\langle T,T\rangle, \forall T\in End_{\mathcal{A}}^{\ast}(\mathcal{H, K}),
	\end{equation*}
	i.e $\{T_{i}\}_{i\in I}$ is a generalized frame for $End_{\mathcal{A}}^{\ast}(\mathcal{H, K})$.
	
	Conversely, assume that $\{T_{i}\}_{i\in I}$ be a generalized frame for $End_{\mathcal{A}}^{\ast}(\mathcal{H, K})$.
	
	Then there exist strictly positive elements $A$ and $B$ in $\mathcal{A}$, such that
	\begin{equation*}
	A\langle T,T\rangle\leq\sum_{i\in I}\langle T,T_{i}\rangle\langle T_{i},T\rangle\leq B\langle T,T\rangle, \forall T\in End_{\mathcal{A}}^{\ast}(\mathcal{H, K}),
	\end{equation*}
	
	\begin{equation*}\Longleftrightarrow
	ATT^{\ast}\leq\sum_{i\in I}TT_{i}^{\ast}T_{i}T^{\ast}\leq BTT^{\ast}, \forall T\in End_{\mathcal{A}}^{\ast}(\mathcal{H, K}).
	\end{equation*}
	So
	\begin{equation*}
	A\langle TT^{\ast}x,x\rangle_{\mathcal{A}}\leq\sum_{i\in I}\langle TT_{i}^{\ast}T_{i}T^{\ast}x,x\rangle_{\mathcal{A}}\leq B\langle TT^{\ast}x,x\rangle_{\mathcal{A}}, \forall T\in End_{\mathcal{A}}^{\ast}(\mathcal{H, K}), \forall x\in\mathcal{K}.
	\end{equation*}
	Let $y\in\mathcal{H}$ and $T\in End_{\mathcal{A}}^{\ast}(\mathcal{H, K})$ such that $T^{\ast}x=y$, then
	\begin{equation*}
	A\langle y,y\rangle_{\mathcal{A}}\leq\sum_{i\in I}\langle T_{i}^{\ast}T_{i}y,y\rangle_{\mathcal{A}}\leq B\langle y,y\rangle_{\mathcal{A}}, \forall y\in\mathcal{H},
	\end{equation*}
	i.e
	\begin{equation*}
	A\langle y,y\rangle_{\mathcal{A}}\leq\sum_{i\in I}\langle T_{i}y,T_{i}y\rangle_{\mathcal{A}}\leq B\langle y,y\rangle_{\mathcal{A}}, \forall y\in\mathcal{H},
	\end{equation*}
	thus  $\{T_{i}\}_{i\in I}$ is an $\ast$-g-frame for $\mathcal{H}$ with respect to $\mathcal{K}$.
\end{proof}
\begin{Cor}
	A sequence $\{T_{i}\in End_{A}^{\ast}(\mathcal{H, K}): i\in I\}$ is a generalized tight frame for $End_{A}^{\ast}(\mathcal{H, K})$ if and only if it is a tight $\ast$-g-frame for $\mathcal{H}$ whit respect to $\mathcal{K}$.
\end{Cor}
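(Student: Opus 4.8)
The plan is to re-run the chain of equivalences established in the preceding Theorem, but to carry an \emph{equality} through every stage so that the single tight bound is transported intact between the two formulations. By definition a generalized tight frame is the case $A=B$ of the generalized-frame inequality, namely the identity
\begin{equation*}
A\langle T,T\rangle=\sum_{i\in I}\langle T,T_{i}\rangle\langle T_{i},T\rangle,\qquad \forall T\in End_{\mathcal{A}}^{\ast}(\mathcal{H, K}),
\end{equation*}
whereas a tight $\ast$-g-frame is the case of coinciding bounds in the $\ast$-g-frame definition; since $\mathcal{A}$ is commutative the two-sided factor collapses (exactly as in the paper's first Theorem) and this identity reads
\begin{equation*}
A\langle x,x\rangle_{\mathcal{A}}=\sum_{i\in I}\langle T_{i}x,T_{i}x\rangle_{\mathcal{A}},\qquad \forall x\in\mathcal{H},
\end{equation*}
with the same strictly positive $A\in\mathcal{A}$. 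The corollary asserts these two identities are equivalent with identical $A$.

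Starting from the tight $\ast$-g-frame identity I would first rewrite $\langle T_{i}x,T_{i}x\rangle_{\mathcal{A}}=\langle T_{i}^{\ast}T_{i}x,x\rangle_{\mathcal{A}}$ and use $A\langle x,x\rangle_{\mathcal{A}}=\langle AI_{\mathcal{H}}x,x\rangle_{\mathcal{A}}$, so the identity becomes an equality of quadratic forms; since both sides are self-adjoint operators, polarization in the Hilbert module gives the operator identity $AI_{\mathcal{H}}=\sum_{i\in I}T_{i}^{\ast}T_{i}$. Multiplying on the left by an arbitrary $T$ and on the right by $T^{\ast}$ yields $ATT^{\ast}=\sum_{i\in I}TT_{i}^{\ast}T_{i}T^{\ast}$, and recognising $TT^{\ast}=\langle T,T\rangle$ together with $TT_{i}^{\ast}T_{i}T^{\ast}=\langle T,T_{i}\rangle\langle T_{i},T\rangle$ produces the generalized tight frame identity. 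Each of these steps is reversible, which disposes of one implication and most of the other.

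For the converse I would unwind the generalized tight frame identity along the same path: from $A\langle T,T\rangle=\sum_{i}\langle T,T_{i}\rangle\langle T_{i},T\rangle$ pass to $ATT^{\ast}=\sum_{i}TT_{i}^{\ast}T_{i}T^{\ast}$, apply both sides to an arbitrary $x\in\mathcal{K}$ and pair with $x$, and then exploit that $T^{\ast}$ sweeps out all of $\mathcal{H}$ as $T$ and $x$ vary. Choosing a pair $T,x$ with $T^{\ast}x=y$ for a prescribed $y\in\mathcal{H}$ collapses the identity, after pulling $A$ out and using $\langle TT^{\ast}x,x\rangle_{\mathcal{A}}=\langle T^{\ast}x,T^{\ast}x\rangle_{\mathcal{A}}$, to $A\langle y,y\rangle_{\mathcal{A}}=\sum_{i}\langle T_{i}^{\ast}T_{i}y,y\rangle_{\mathcal{A}}=\sum_{i}\langle T_{i}y,T_{i}y\rangle_{\mathcal{A}}$, which is precisely the tight $\ast$-g-frame identity.

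The genuinely delicate point is this surjectivity step in the converse: one must check that every $y\in\mathcal{H}$ is realised as $T^{\ast}x$ for some admissible pair, so that reducing the operator identity to a vector identity loses no information and, crucially, forces the same $A$ on both sides. This is the device already employed in the preceding Theorem and transfers verbatim; the only new content here is the bookkeeping that the lone bound $A$ persists unsplit through each reversible arrow, rather than relaxing into a pair $A\le B$.
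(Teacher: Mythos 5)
Your proposal is correct and is essentially the paper's intended argument: the corollary carries no separate proof in the paper precisely because it follows by rerunning the chain of equivalences from the preceding Theorem with equalities in place of inequalities, which is exactly what you do (including the collapse $a\langle x,x\rangle a^{\ast}=aa^{\ast}\langle x,x\rangle$ from commutativity). The surjectivity step you flag --- realising every $y\in\mathcal{H}$ as $T^{\ast}x$ --- is indeed the delicate point, but it is inherited verbatim from the Theorem's proof rather than being new to the tight case, as you correctly observe.
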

\begin{Cor}
	A sequence $\{T_{i}\in End_{A}^{\ast}(\mathcal{H, K}): i\in I\}$ is a generalized Bessel sequence for $End_{A}^{\ast}(\mathcal{H, K})$ if and only if it is an $\ast$-g-Bessel sequence for $\mathcal{H}$ whit respect to $\mathcal{K}$.
\end{Cor}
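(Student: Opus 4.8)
The plan is to deduce this corollary directly from the preceding Theorem by discarding its lower frame inequality and keeping only the upper (Bessel) bound, since a Bessel sequence is by definition governed by exactly the right-hand inequality of the corresponding frame definition. Concretely, a \emph{generalized Bessel sequence for} $End_{\mathcal{A}}^{\ast}(\mathcal{H,K})$ means there is a strictly positive $B\in\mathcal{A}$ with $\sum_{i\in I}\langle T,T_{i}\rangle\langle T_{i},T\rangle\leq B\langle T,T\rangle$ for all $T$, while an $\ast$-g-Bessel sequence means $\sum_{i\in I}\langle T_{i}x,T_{i}x\rangle_{\mathcal{A}}\leq B\langle x,x\rangle_{\mathcal{A}}$ for all $x\in\mathcal{H}$; so it suffices to reproduce the upper-bound half of the Theorem's argument.

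For the forward direction I would suppose $\{T_{i}\}_{i\in I}$ is an $\ast$-g-Bessel sequence, rewrite $\langle T_{i}x,T_{i}x\rangle_{\mathcal{A}}=\langle T_{i}^{\ast}T_{i}x,x\rangle_{\mathcal{A}}$, and pass to the operator inequality $\sum_{i\in I}T_{i}^{\ast}T_{i}\leq BI_{\mathcal{H}}$ in $End_{\mathcal{A}}^{\ast}(\mathcal{H})$. Conjugating by an arbitrary $T\in End_{\mathcal{A}}^{\ast}(\mathcal{H,K})$ on the left and $T^{\ast}$ on the right preserves order, giving $\sum_{i\in I}TT_{i}^{\ast}T_{i}T^{\ast}\leq BTT^{\ast}$; recognizing $TT_{i}^{\ast}T_{i}T^{\ast}=\langle T,T_{i}\rangle\langle T_{i},T\rangle$ and $TT^{\ast}=\langle T,T\rangle$ for the $End_{\mathcal{A}}^{\ast}(\mathcal{K})$-valued inner product then yields the generalized Bessel bound $\sum_{i\in I}\langle T,T_{i}\rangle\langle T_{i},T\rangle\leq B\langle T,T\rangle$.

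For the converse I would start from $\sum_{i\in I}TT_{i}^{\ast}T_{i}T^{\ast}\leq BTT^{\ast}$, evaluate both sides at $x\in\mathcal{K}$ to get $\sum_{i\in I}\langle T_{i}T^{\ast}x,T_{i}T^{\ast}x\rangle_{\mathcal{A}}\leq B\langle T^{\ast}x,T^{\ast}x\rangle_{\mathcal{A}}$, and then, for a prescribed $y\in\mathcal{H}$, choose $T$ and $x$ with $T^{\ast}x=y$ to recover $\sum_{i\in I}\langle T_{i}y,T_{i}y\rangle_{\mathcal{A}}\leq B\langle y,y\rangle_{\mathcal{A}}$, which is exactly the $\ast$-g-Bessel condition.

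The main obstacle is the one already present in the Theorem: justifying the step ``choose $T\in End_{\mathcal{A}}^{\ast}(\mathcal{H,K})$ and $x\in\mathcal{K}$ with $T^{\ast}x=y$'' for an arbitrary $y\in\mathcal{H}$, which requires that the ranges of the adjoints exhaust $\mathcal{H}$ (e.g.\ by exhibiting a suitable rank-one type operator or otherwise invoking the module structure). Granting this exactly as the Theorem does, the equivalence follows verbatim from its upper-bound half, so I would phrase the corollary as an immediate consequence of that Theorem rather than reproving it from scratch.
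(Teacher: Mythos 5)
Your proposal is correct and matches the paper's intent exactly: the corollary is stated without proof as an immediate consequence of the preceding theorem, and your argument is precisely the upper-bound half of that theorem's proof (operator inequality $\sum_{i}T_{i}^{\ast}T_{i}\leq BI_{\mathcal{H}}$, conjugation by $T$ and $T^{\ast}$, and the converse via $T^{\ast}x=y$). The surjectivity issue you flag in the step ``choose $T$ and $x$ with $T^{\ast}x=y$'' is indeed present, but it is inherited verbatim from the paper's own proof of the theorem, so your treatment is no weaker than the source.
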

\section{K-frame for $End_{A}^{\ast}(\mathcal{H, K})$}
We begin this section with the follwing definition.
\begin{Def}
	Let $K\in End_{A}^{\ast}(\mathcal{H})$. A sequence $\{T_{i}\in End_{A}^{\ast}(\mathcal{H, K}): i\in I\}$ is said to be a $K$-frame for $End_{A}^{\ast}(\mathcal{H, K})$ if there exist $0 < A, B < \infty$ sach that
	\begin{equation}\label{eq3.3.2}
	A\langle TK,TK\rangle\leq\sum_{i\in I}\langle T,T_{i}\rangle\langle T_{i},T\rangle\leq B\langle T,T\rangle, \forall T\in End_{A}^{\ast}(\mathcal{H,K}),
	\end{equation}
	where the series converges in the strong operator topology.
\end{Def}
\begin{Ex}
	For $n\in\mathbf{N^{\ast}}$, let $\{T_{i}\}_{i=1}^{n}\subset End_{\mathcal{A}}^{\ast}(\mathcal{H})$. 
	
	Then for $K=\big(\sum_{i=1}^{n} T_{i}^{\ast}T_{i}\big)^{\frac{1}{2}} $, we have
	\begin{equation*}
K^{\ast}K=\sum_{i=1}^{n} T_{i}^{\ast}T_{i}
	\end{equation*} 
	So 
	\begin{equation*}
	TKK^{\ast}T^{\ast}=\sum_{i=1}^{n} TT_{i}^{\ast}T_{i}T^{\ast}
	\end{equation*} 
	Hence
	\begin{equation*}
	\langle TK, TK\rangle=\sum_{i=1}^{n}\langle T, T_{i}\rangle\langle T_{i}, T\rangle,  \forall T\in End_{\mathcal{A}}^{\ast}(\mathcal{H}).
	\end{equation*} 
	
	This shows that $\{T_{i}\}_{i=1}^{n}$ is a Parseval $K$-frame for $End_{\mathcal{A}}^{\ast}(\mathcal{H})$.
\end{Ex}
	Similar to Remark $1$ in \cite{Ros2} we have 
\begin{Rem}
	\begin{enumerate}
		\item Every frame for $End_{A}^{\ast}(\mathcal{H,K})$ is a $K$-frame, for any $K\in End_{A}^{\ast}(\mathcal{H})$: $K\neq0$.
		\item If $K\in End_{A}^{\ast}(\mathcal{H})$ is a surjective operator, then every $K$-frame for $End_{A}^{\ast}(\mathcal{H,K})$ is a frame for $End_{A}^{\ast}(\mathcal{H, K})$.
	\end{enumerate}
\end{Rem}
\begin{Th}
	Let $K\in End_{\mathcal{A}}^{\ast}(\mathcal{H})$ and $\{T_{i}\in End_{\mathcal{A}}^{\ast}(\mathcal{H, K}): i\in I\}$ be a frame for $End_{\mathcal{A}}^{\ast}(\mathcal{H, K})$. Then $\{T_{i}K^{\ast}\}_{i\in I}$ is a $K$-frame for $End_{\mathcal{A}}^{\ast}(\mathcal{H, K})$.
\end{Th}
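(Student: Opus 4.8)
The plan is to reduce the frame hypothesis to an operator inequality on $\mathcal{H}$ and then conjugate it by $K$ and $K^{\ast}$ to produce the $K$-frame inequality. First I would record what the inner product on $End_{\mathcal{A}}^{\ast}(\mathcal{H, K})$ does to the relevant quantities. Since $\langle T,S\rangle=TS^{\ast}$, we have $\langle TK,TK\rangle=TKK^{\ast}T^{\ast}$, and for the candidate system $\{T_{i}K^{\ast}\}_{i\in I}$, using $(T_{i}K^{\ast})^{\ast}=KT_{i}^{\ast}$,
\begin{equation*}
\langle T,T_{i}K^{\ast}\rangle\langle T_{i}K^{\ast},T\rangle=(TKT_{i}^{\ast})(T_{i}K^{\ast}T^{\ast})=TKT_{i}^{\ast}T_{i}K^{\ast}T^{\ast}.
\end{equation*}

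Next, exactly as in the proof that a frame for $End_{\mathcal{A}}^{\ast}(\mathcal{H, K})$ is a g-frame, I would extract from the frame bounds $0<A,B<\infty$ the operator inequality
\begin{equation*}
AI_{\mathcal{H}}\leq\sum_{i\in I}T_{i}^{\ast}T_{i}\leq BI_{\mathcal{H}},
\end{equation*}
obtained by writing the frame condition as an inequality between $\langle TT^{\ast}x,x\rangle$-type terms and letting $y=T^{\ast}x$ range over all of $\mathcal{H}$ to cancel the outer factors $T,T^{\ast}$.

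For the lower bound I would conjugate $AI_{\mathcal{H}}\leq\sum_{i\in I}T_{i}^{\ast}T_{i}$ first by $K$ on the left and $K^{\ast}$ on the right, which preserves the order of positive elements, obtaining $AKK^{\ast}\leq\sum_{i\in I}KT_{i}^{\ast}T_{i}K^{\ast}$, and then by $T$ and $T^{\ast}$, which yields
\begin{equation*}
A\,TKK^{\ast}T^{\ast}\leq\sum_{i\in I}TKT_{i}^{\ast}T_{i}K^{\ast}T^{\ast},
\end{equation*}
that is, $A\langle TK,TK\rangle\leq\sum_{i\in I}\langle T,T_{i}K^{\ast}\rangle\langle T_{i}K^{\ast},T\rangle$. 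For the upper bound I would start from $\sum_{i\in I}T_{i}^{\ast}T_{i}\leq BI_{\mathcal{H}}$, conjugate by $K,K^{\ast}$, and then use $KK^{\ast}\leq\|K\|^{2}I_{\mathcal{H}}$ to return to a multiple of the identity, giving $\sum_{i\in I}KT_{i}^{\ast}T_{i}K^{\ast}\leq B\|K\|^{2}I_{\mathcal{H}}$; conjugating by $T,T^{\ast}$ then produces $\sum_{i\in I}\langle T,T_{i}K^{\ast}\rangle\langle T_{i}K^{\ast},T\rangle\leq B\|K\|^{2}\langle T,T\rangle$. Hence $\{T_{i}K^{\ast}\}_{i\in I}$ is a $K$-frame for $End_{\mathcal{A}}^{\ast}(\mathcal{H, K})$ with bounds $A$ and $B\|K\|^{2}$.

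The only delicate points are the two conjugation steps: preservation of operator order under $S\mapsto RSR^{\ast}$, which is valid because $R(P-Q)R^{\ast}\geq 0$ whenever $P\geq Q$, and, for the upper bound, the replacement of $KK^{\ast}$ by $\|K\|^{2}I_{\mathcal{H}}$ so that the right-hand side becomes a multiple of $\langle T,T\rangle$ rather than of $\langle TK,TK\rangle$, matching the asymmetric shape of the $K$-frame definition \eqref{eq3.3.2}. Convergence of the modified series in the strong operator topology is inherited from that of the original frame, since each new term arises by composing the original terms with the fixed operators $K,K^{\ast},T,T^{\ast}$.
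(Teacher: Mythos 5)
Your proof is correct and arrives at the same bounds $A$ and $B\|K\|^{2}$ as the paper, but by a genuinely different (and longer) route. The paper's argument is a one-step substitution: since $TK\in End_{\mathcal{A}}^{\ast}(\mathcal{H},\mathcal{K})$ whenever $T$ is, it applies the frame inequality directly to $TK$, rewrites $\langle TK,T_{i}\rangle=TKT_{i}^{\ast}=\langle T,T_{i}K^{\ast}\rangle$, and then bounds the right-hand side via $B\langle TK,TK\rangle=BTKK^{\ast}T^{\ast}\leq B\|K\|^{2}\langle T,T\rangle$. You instead first descend to the operator inequality $AI_{\mathcal{H}}\leq\sum_{i\in I}T_{i}^{\ast}T_{i}\leq BI_{\mathcal{H}}$ on $\mathcal{H}$ and then conjugate twice, by $K,K^{\ast}$ and by $T,T^{\ast}$; the conjugation steps are sound ($P\geq Q$ implies $RPR^{\ast}\geq RQR^{\ast}$), and your final identifications with the module inner product $\langle T,S\rangle=TS^{\ast}$ are correct. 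What your route costs is a dependence on the cancellation step borrowed from the paper's equivalence between frames for $End_{\mathcal{A}}^{\ast}(\mathcal{H},\mathcal{K})$ and g-frames: passing from $ATT^{\ast}\leq\sum_{i}TT_{i}^{\ast}T_{i}T^{\ast}\leq BTT^{\ast}$ for all $T$ to $AI_{\mathcal{H}}\leq\sum_{i}T_{i}^{\ast}T_{i}\leq BI_{\mathcal{H}}$ requires that every $y\in\mathcal{H}$ be realized as $T^{\ast}x$, which is the one genuinely delicate point in this circle of arguments and is not automatic in an arbitrary Hilbert $C^{\ast}$-module. The paper's direct substitution sidesteps that issue entirely for this theorem, so it is both shorter and logically lighter; your version does, however, make the role of the positive operator $\sum_{i}T_{i}^{\ast}T_{i}$ and the exact source of the factor $\|K\|^{2}$ more transparent.
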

\begin{proof}
	Let $\{T_{i}\}_{i\in I}$ be a frame for $End_{\mathcal{A}}^{\ast}(\mathcal{H, K})$.
	
	Then there exists two positive constants $A$ and $B$, such that
	\begin{equation*}
	A\langle T,T\rangle\leq\sum_{i\in I}\langle T,T_{i}\rangle\langle T_{i},T\rangle\leq B\langle T,T\rangle, \forall T\in End_{\mathcal{A}}^{\ast}(\mathcal{H, K}).
	\end{equation*}
	Hence
	\begin{equation*}
	A\langle TK,TK\rangle\leq\sum_{i\in I}\langle TK,T_{i}\rangle\langle T_{i},TK\rangle\leq B\langle TK,TK\rangle, \forall T\in End_{\mathcal{A}}^{\ast}(\mathcal{H, K}).
	\end{equation*}
	Thus
	\begin{equation*}
	A\langle TK,TK\rangle\leq\sum_{i\in I}\langle T,T_{i}K^{\ast}\rangle\langle T_{i}K^{\ast},T\rangle\leq B\|K\|^{2}\langle T,T\rangle, \forall T\in End_{\mathcal{A}}^{\ast}(\mathcal{H, K}).
	\end{equation*}
	Then $\{T_{i}K^{\ast}\}_{i\in I}$ is a $K$-frame for $End_{\mathcal{A}}^{\ast}(\mathcal{H, K})$.
\end{proof}

\begin{Th}
	Let $K_{1}, K_{2}\in End_{\mathcal{A}}^{\ast}(\mathcal{H})$ and $\{T_{i}\in End_{\mathcal{A}}^{\ast}(\mathcal{H, K}): i\in I\}$ be a $K_{1}$-frame for $End_{\mathcal{A}}^{\ast}(\mathcal{H, K})$. Then $\{T_{i}K_{2}^{\ast}\}_{i\in I}$ is a $K_{2}K_{1}$-frame for $End_{\mathcal{A}}^{\ast}(\mathcal{H, K})$.
\end{Th}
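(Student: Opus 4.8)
The plan is to follow exactly the template of the preceding theorem: start from the defining inequalities of the $K_1$-frame and perform the substitution $T \mapsto TK_2$, which is legitimate because $T \in End_{\mathcal{A}}^{\ast}(\mathcal{H, K})$ and $K_2 \in End_{\mathcal{A}}^{\ast}(\mathcal{H})$ force $TK_2 \in End_{\mathcal{A}}^{\ast}(\mathcal{H, K})$ with adjoint $(TK_2)^{\ast} = K_2^{\ast}T^{\ast}$. Concretely, the hypothesis supplies constants $0 < A, B < \infty$ with
\begin{equation*}
A\langle TK_1, TK_1\rangle \leq \sum_{i\in I}\langle T, T_i\rangle\langle T_i, T\rangle \leq B\langle T, T\rangle, \quad \forall T\in End_{\mathcal{A}}^{\ast}(\mathcal{H, K}),
\end{equation*}
and since this holds for every operator, I would replace $T$ by $TK_2$ throughout.

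After the substitution, I would rewrite each of the three terms. On the left, $TK_2K_1 = T(K_2K_1)$, so that $\langle TK_2K_1, TK_2K_1\rangle = \langle T(K_2K_1), T(K_2K_1)\rangle$, which is precisely the quantity appearing in the lower bound of a $K_2K_1$-frame. For the middle term I would use the module inner product $\langle S, R\rangle = SR^{\ast}$ to compute $\langle TK_2, T_i\rangle = TK_2T_i^{\ast} = T(T_iK_2^{\ast})^{\ast} = \langle T, T_iK_2^{\ast}\rangle$, so that the sum becomes $\sum_{i\in I}\langle T, T_iK_2^{\ast}\rangle\langle T_iK_2^{\ast}, T\rangle$, exactly the frame sum associated with the candidate family $\{T_iK_2^{\ast}\}_{i\in I}$. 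For the right term I would bound $\langle TK_2, TK_2\rangle = TK_2K_2^{\ast}T^{\ast} \leq \|K_2\|^2 TT^{\ast} = \|K_2\|^2\langle T, T\rangle$, using $K_2K_2^{\ast} \leq \|K_2\|^2 I_{\mathcal{H}}$.

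Combining these three observations yields
\begin{equation*}
A\langle T(K_2K_1), T(K_2K_1)\rangle \leq \sum_{i\in I}\langle T, T_iK_2^{\ast}\rangle\langle T_iK_2^{\ast}, T\rangle \leq B\|K_2\|^2\langle T, T\rangle, \quad \forall T\in End_{\mathcal{A}}^{\ast}(\mathcal{H, K}),
\end{equation*}
which is exactly the assertion that $\{T_iK_2^{\ast}\}_{i\in I}$ is a $K_2K_1$-frame with bounds $A$ and $B\|K_2\|^2$. I do not expect a genuine obstacle here, since the argument is a direct substitution; the only points requiring care are the noncommutative bookkeeping of the operator composition (the substitution produces $TK_2K_1 = T(K_2K_1)$, which gives a $K_2K_1$-frame rather than a $K_1K_2$-frame) and the remark that the strong-operator convergence of the series is preserved when $T$ is replaced by $TK_2$.
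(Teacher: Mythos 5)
Your proposal is correct and follows essentially the same route as the paper: the paper's proof likewise substitutes $T\mapsto TK_{2}$ into the $K_{1}$-frame inequality, rewrites $\langle TK_{2},T_{i}\rangle$ as $\langle T,T_{i}K_{2}^{\ast}\rangle$ via $\langle S,R\rangle=SR^{\ast}$, and bounds the upper term by $B\|K_{2}\|^{2}\langle T,T\rangle$. Your added remarks on why $TK_{2}\in End_{\mathcal{A}}^{\ast}(\mathcal{H},\mathcal{K})$ and on the order of composition $T(K_{2}K_{1})$ only make the argument more explicit than the paper's.
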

\begin{proof}
	Let $\{T_{i}\}_{i\in I}$ be a $K_{1}$-frame for $End_{\mathcal{A}}^{\ast}(\mathcal{H, K})$.
	
	Then there exists two positive constants $A$ and $B$, such that
	\begin{equation*}
	A\langle TK_{1},TK_{1}\rangle\leq\sum_{i\in I}\langle T,T_{i}\rangle\langle T_{i},T\rangle\leq B\langle T,T\rangle, \forall T\in End_{\mathcal{A}}^{\ast}(\mathcal{H, K}).
	\end{equation*}
	Hence
	\begin{equation*}
	A\langle TK_{2}K_{1},TK_{2}K_{1}\rangle\leq\sum_{i\in I}\langle TK_{2},T_{i}\rangle\langle T_{i},TK_{2}\rangle\leq B\langle TK_{2},TK_{2}\rangle, \forall T\in End_{\mathcal{A}}^{\ast}(\mathcal{H, K}).
	\end{equation*}
	Thus
	\begin{equation*}
	A\langle TK_{2}K_{1},TK_{2}K_{1}\rangle\leq\sum_{i\in I}\langle T,T_{i}K_{2}^{\ast}\rangle\langle T_{i}K_{2}^{\ast},T\rangle\leq B\|K_{2}\|^{2}\langle T,T\rangle, \forall T\in End_{\mathcal{A}}^{\ast}(\mathcal{H, K}).
	\end{equation*}
	Then $\{T_{i}K_{2}^{\ast}\}_{i\in I}$ is a $K_{2}K_{1}$-frame for $End_{\mathcal{A}}^{\ast}(\mathcal{H, K})$.
\end{proof}
\begin{Cor}
	Let $K\in End_{\mathcal{A}}^{\ast}(\mathcal{H})$ and $\{T_{i}\in End_{\mathcal{A}}^{\ast}(\mathcal{H, K}): i\in I\}$ be a $K$-frame for $End_{\mathcal{A}}^{\ast}(\mathcal{H, K})$. Then $\{T_{i}(K^{\ast})^{N}\}_{i\in I}$ is a $K^{N+1}$-frame for $End_{\mathcal{A}}^{\ast}(\mathcal{H, K})$.	
\end{Cor}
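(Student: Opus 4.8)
The plan is to prove the statement by induction on $N$, using the immediately preceding Theorem (with the pair $K_1,K_2$) as the inductive engine. The entire content of the corollary is a repeated application of that Theorem, so no new analytic estimate is needed; the work is purely the tracking of exponents.

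For the base case $N=1$, I would apply the preceding Theorem with $K_1=K_2=K$. Since $\{T_i\}_{i\in I}$ is by hypothesis a $K$-frame, i.e.\ a $K_1$-frame with $K_1=K$, the Theorem yields that $\{T_iK^{\ast}\}_{i\in I}$ is a $K_2K_1=K^{2}$-frame for $End_{\mathcal{A}}^{\ast}(\mathcal{H,K})$, which is exactly the assertion for $N=1$. (If one prefers $N=0$ as the base, it is the trivial case $(K^{\ast})^{0}=I_{\mathcal{H}}$ and $K^{0+1}=K$, which is just the hypothesis.)

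For the inductive step, I would assume that $\{T_i(K^{\ast})^{N}\}_{i\in I}$ is a $K^{N+1}$-frame. First I would note that each operator $T_i(K^{\ast})^{N}$ still lies in $End_{\mathcal{A}}^{\ast}(\mathcal{H,K})$, being the composition of $(K^{\ast})^{N}\in End_{\mathcal{A}}^{\ast}(\mathcal{H})$ with $T_i\in End_{\mathcal{A}}^{\ast}(\mathcal{H,K})$, so the hypotheses of the preceding Theorem are met. Then I would apply that Theorem with the family $\{T_i(K^{\ast})^{N}\}_{i\in I}$ playing the role of the $K_1$-frame for $K_1=K^{N+1}$, and with $K_2=K$. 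Its conclusion gives that $\{(T_i(K^{\ast})^{N})K^{\ast}\}_{i\in I}=\{T_i(K^{\ast})^{N+1}\}_{i\in I}$ is a $K_2K_1=K\cdot K^{N+1}=K^{N+2}$-frame for $End_{\mathcal{A}}^{\ast}(\mathcal{H,K})$, which closes the induction.

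The only point requiring care, and the nearest thing to an obstacle, is the exponent bookkeeping: one must confirm the operator identities $(K^{\ast})^{N}K^{\ast}=(K^{\ast})^{N+1}$ and $K\cdot K^{N+1}=K^{N+2}$, and verify that the derived family remains inside $End_{\mathcal{A}}^{\ast}(\mathcal{H,K})$ at each stage so that the Theorem is legitimately applicable. Beyond this routine verification there is no analytic difficulty, and the induction delivers the result for every $N$.
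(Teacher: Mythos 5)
Your proposal is correct and matches the paper's intent exactly: the paper's entire proof is the single line ``It follows from the previous theorem,'' and your induction on $N$ with $K_{1}=K^{N+1}$, $K_{2}=K$ is precisely the repeated application of that theorem which the paper leaves implicit. You have simply made the exponent bookkeeping explicit, which is all the corollary requires.
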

\begin{proof}
	It follows from the previous theorem.
\end{proof}	
\begin{Th}
	A sequence $\{T_{i}\in End_{A}^{\ast}(\mathcal{H,K}): i\in I\}$ is a $K$-frame for $End_{A}^{\ast}(\mathcal{H,K})$ if and only if it is a $K$-g-frame for $\mathcal{H}$ whit respect to $\mathcal{K}$.
\end{Th}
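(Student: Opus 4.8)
The plan is to show that both the $K$-frame property for $End_{\mathcal{A}}^{\ast}(\mathcal{H,K})$ and the $K$-g-frame property for $\mathcal{H}$ are equivalent reformulations of a single operator inequality on $\mathcal{H}$, namely
\begin{equation*}
AKK^{\ast}\leq\sum_{i\in I}T_{i}^{\ast}T_{i}\leq BI_{\mathcal{H}},
\end{equation*}
exactly as in the earlier Theorem relating frames and g-frames, but now with the lower operator $I_{\mathcal{H}}$ replaced by $KK^{\ast}$. The translation between the two languages is governed by the identities $\langle T,S\rangle=TS^{\ast}$, $\langle TK,TK\rangle=TKK^{\ast}T^{\ast}$, together with $\langle K^{\ast}x,K^{\ast}x\rangle_{\mathcal{A}}=\langle KK^{\ast}x,x\rangle_{\mathcal{A}}$ and $\langle T_{i}x,T_{i}x\rangle_{\mathcal{A}}=\langle T_{i}^{\ast}T_{i}x,x\rangle_{\mathcal{A}}$; each of these elements is self-adjoint and positive, so no commutativity of $\mathcal{A}$ is needed here.

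For the implication that a $K$-g-frame yields a $K$-frame, I would start from $A\langle K^{\ast}x,K^{\ast}x\rangle_{\mathcal{A}}\leq\sum_{i\in I}\langle T_{i}x,T_{i}x\rangle_{\mathcal{A}}\leq B\langle x,x\rangle_{\mathcal{A}}$ for all $x\in\mathcal{H}$, rewrite it via the two displayed identities as $A\langle KK^{\ast}x,x\rangle_{\mathcal{A}}\leq\sum_{i\in I}\langle T_{i}^{\ast}T_{i}x,x\rangle_{\mathcal{A}}\leq B\langle x,x\rangle_{\mathcal{A}}$, and deduce the operator inequality above. Then, for an arbitrary $T\in End_{\mathcal{A}}^{\ast}(\mathcal{H,K})$, I would sandwich this inequality between $T$ and $T^{\ast}$, using the standard monotonicity $P\leq Q\Rightarrow TPT^{\ast}\leq TQT^{\ast}$ for positive operators, to obtain $ATKK^{\ast}T^{\ast}\leq\sum_{i\in I}TT_{i}^{\ast}T_{i}T^{\ast}\leq BTT^{\ast}$. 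Rewriting the three terms as $A\langle TK,TK\rangle$, $\sum_{i\in I}\langle T,T_{i}\rangle\langle T_{i},T\rangle$ and $B\langle T,T\rangle$ gives precisely the defining inequality of a $K$-frame.

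For the converse, I would run the same chain backwards. Starting from the $K$-frame inequality $ATKK^{\ast}T^{\ast}\leq\sum_{i\in I}TT_{i}^{\ast}T_{i}T^{\ast}\leq BTT^{\ast}$ valid for every $T$, I would apply each side to a vector $x\in\mathcal{K}$ and take the $\mathcal{A}$-valued inner product with $x$, then push the outer $T$ onto $x$ by adjunction so that the common argument becomes $T^{\ast}x$; this yields $A\langle KK^{\ast}(T^{\ast}x),T^{\ast}x\rangle_{\mathcal{A}}\leq\sum_{i\in I}\langle T_{i}^{\ast}T_{i}(T^{\ast}x),T^{\ast}x\rangle_{\mathcal{A}}\leq B\langle T^{\ast}x,T^{\ast}x\rangle_{\mathcal{A}}$. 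Setting $y=T^{\ast}x$ and recognizing $\langle KK^{\ast}y,y\rangle_{\mathcal{A}}=\langle K^{\ast}y,K^{\ast}y\rangle_{\mathcal{A}}$ and $\langle T_{i}^{\ast}T_{i}y,y\rangle_{\mathcal{A}}=\langle T_{i}y,T_{i}y\rangle_{\mathcal{A}}$ recovers the $K$-g-frame inequality.

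The step that requires genuine care, and which is the main obstacle, is the last substitution in the converse: one must know that as $T$ ranges over $End_{\mathcal{A}}^{\ast}(\mathcal{H,K})$ and $x$ over $\mathcal{K}$, the vector $y=T^{\ast}x$ exhausts all of $\mathcal{H}$, so that the resulting inequality holds for every $y\in\mathcal{H}$. This is the same surjectivity point tacitly used in the earlier frame versus g-frame theorem, and is handled by choosing, for a given $y$, a suitable rank-one type adjointable operator $T$ together with an $x$ satisfying $T^{\ast}x=y$; granting this, the equivalence follows. The remaining bookkeeping, namely convergence of $\sum_{i\in I}T_{i}^{\ast}T_{i}$ in the strong operator topology and the self-adjointness justifying the inner-product rewrites, is routine and parallels the earlier proofs.
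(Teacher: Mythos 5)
Your proposal follows essentially the same route as the paper's own proof: both directions reduce the two frame conditions to the single operator inequality $AKK^{\ast}\leq\sum_{i\in I}T_{i}^{\ast}T_{i}\leq BI_{\mathcal{H}}$, conjugate by $T$ and $T^{\ast}$ for the forward implication, and substitute $y=T^{\ast}x$ for the converse. If anything, you are more careful than the paper, which silently assumes that $y=T^{\ast}x$ ranges over all of $\mathcal{H}$, a point you correctly flag as needing a suitable choice of adjointable $T$ and $x$.
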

\begin{proof}
	Let $\{T_{i}\}_{i\in I}$ be a $K$-g-frame for $\mathcal{H}$ with respect to $\mathcal{K}$.
	
	Then there exists two positive constants $A$ and $B$, such that 
	\begin{equation*}
	A\langle K^{\ast}x, K^{\ast}x\rangle_{\mathcal{A}}\leq\sum_{i\in I}\langle T_{i}x, T_{i}x\rangle_{\mathcal{A}}\leq B\langle x, x\rangle_{\mathcal{A}}, \forall x\in\mathcal{H},
	\end{equation*}
	 \begin{equation*}\Longleftrightarrow
	A\langle KK^{\ast}x,x\rangle_{\mathcal{A}}\leq\sum_{i\in I}\langle T_{i}^{\ast}T_{i}x,x\rangle_{\mathcal{A}}\leq B\langle x,x\rangle_{\mathcal{A}}, \forall x\in\mathcal{H}.
	\end{equation*}
	So
	\begin{equation*}
	AKK^{\ast}\leq\sum_{i\in I}T_{i}^{\ast}T_{i}\leq BI_{\mathcal{H}}.
	\end{equation*}
	Hence
	\begin{equation*}
	ATKK^{\ast}T^{\ast}\leq\sum_{i\in I}TT_{i}^{\ast}T_{i}T^{\ast}\leq BTT^{\ast}, \forall T\in End_{\mathcal{A}}^{\ast}(\mathcal{H, K}).
	\end{equation*}
	Thus
	\begin{equation*}
	A\langle TK,TK\rangle\leq\sum_{i\in I}\langle T,T_{i}\rangle\langle T_{i},T\rangle\leq B\langle T,T\rangle, \forall T\in End_{\mathcal{A}}^{\ast}(\mathcal{H, K}),
	\end{equation*}
	i.e $\{T_{i}\}_{i\in I}$ is a $K$-frame for $End_{\mathcal{A}}^{\ast}(\mathcal{H, K})$.
	
	Conversely, assume that $\{T_{i}\}_{i\in I}$ be a $K$-frame for $End_{\mathcal{A}}^{\ast}(\mathcal{H, K})$.
	
	Then there exists two positive constants $A$ and $B$, such that
	\begin{equation*}
	A\langle TK,TK\rangle\leq\sum_{i\in I}\langle T,T_{i}\rangle\langle T_{i},T\rangle\leq B\langle T,T\rangle, \forall T\in End_{\mathcal{A}}^{\ast}(\mathcal{H, K}),
	\end{equation*}
	
	\begin{equation*}\Longleftrightarrow
	ATKK^{\ast}T^{\ast}\leq\sum_{i\in I}TT_{i}^{\ast}T_{i}T^{\ast}\leq BTT^{\ast}, \forall T\in End_{\mathcal{A}}^{\ast}(\mathcal{H, K}).
	\end{equation*}
	So
	\begin{equation*}
	A\langle TKK^{\ast}T^{\ast}x,x\rangle_{\mathcal{A}}\leq\sum_{i\in I}\langle TT_{i}^{\ast}T_{i}T^{\ast}x,x\rangle_{\mathcal{A}}\leq B\langle TT^{\ast}x,x\rangle_{\mathcal{A}}, \forall T\in End_{\mathcal{A}}^{\ast}(\mathcal{H, K}), \forall x\in\mathcal{K}.
	\end{equation*}
	Let $y\in\mathcal{H}$ and $T\in End_{\mathcal{A}}^{\ast}(\mathcal{H, K})$ such that $T^{\ast}x=y$, then
	\begin{equation*}
	A\langle KK^{\ast}y,y\rangle_{\mathcal{A}}\leq\sum_{i\in I}\langle T_{i}^{\ast}T_{i}y,y\rangle_{\mathcal{A}}\leq B\langle y,y\rangle_{\mathcal{A}}, \forall y\in\mathcal{H},
	\end{equation*}
	i.e
	\begin{equation*}
	A\langle K^{\ast}y,K^{\ast}y\rangle_{\mathcal{A}}\leq\sum_{i\in I}\langle T_{i}y,T_{i}y\rangle_{\mathcal{A}}\leq B\langle y,y\rangle_{\mathcal{A}}, \forall y\in\mathcal{H},
	\end{equation*}
	thus  $\{T_{i}\}_{i\in I}$ is a $K$-g-frame for $\mathcal{H}$ with respect to $\mathcal{K}$.
\end{proof}
\begin{Cor}
	A sequence $\{T_{i}\in End_{A}^{\ast}(\mathcal{H, K}): i\in I\}$ is a tight $K$-frame for $End_{A}^{\ast}(\mathcal{H, K})$ if and only if it is a tight $K$-g-frame for $\mathcal{H}$ whit respect to $\mathcal{K}$.
\end{Cor}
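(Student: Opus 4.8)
The plan is to specialize the proof of the preceding Theorem to the diagonal case $A=B$. The key observation is that in that proof the frame bounds are transported \emph{unchanged} in both directions: a $K$-g-frame with bounds $A,B$ produces a $K$-frame with the very same $A,B$, and conversely. Consequently, imposing $A=B=C$ collapses each two-sided inequality into a single equation, and that equation is carried intact along the same chain of equivalences. So no new machinery is needed; I would simply rerun the Theorem's argument with every $\leq$ replaced by $=$.

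Concretely, for the forward direction I would start from the tight $K$-g-frame identity
\[
C\langle K^{\ast}x,K^{\ast}x\rangle_{\mathcal{A}}=\sum_{i\in I}\langle T_{i}x,T_{i}x\rangle_{\mathcal{A}},\quad\forall x\in\mathcal{H},
\]
rewrite the summand as $\langle T_{i}^{\ast}T_{i}x,x\rangle_{\mathcal{A}}$ and the left side as $\langle CKK^{\ast}x,x\rangle_{\mathcal{A}}$, and conclude the operator equality $CKK^{\ast}=\sum_{i\in I}T_{i}^{\ast}T_{i}$ on $\mathcal{H}$. Compressing this identity by an arbitrary $T\in End_{\mathcal{A}}^{\ast}(\mathcal{H,K})$ on the left and $T^{\ast}$ on the right yields $CTKK^{\ast}T^{\ast}=\sum_{i\in I}TT_{i}^{\ast}T_{i}T^{\ast}$, which is precisely the tight $K$-frame identity $C\langle TK,TK\rangle=\sum_{i\in I}\langle T,T_{i}\rangle\langle T_{i},T\rangle$ for $End_{\mathcal{A}}^{\ast}(\mathcal{H,K})$.

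For the converse I would reverse these steps. Starting from $C\langle TK,TK\rangle=\sum_{i\in I}\langle T,T_{i}\rangle\langle T_{i},T\rangle$ for all $T$, I rewrite it as $CTKK^{\ast}T^{\ast}=\sum_{i\in I}TT_{i}^{\ast}T_{i}T^{\ast}$, pair both sides with an arbitrary $x\in\mathcal{K}$ to obtain $C\langle TKK^{\ast}T^{\ast}x,x\rangle_{\mathcal{A}}=\sum_{i\in I}\langle TT_{i}^{\ast}T_{i}T^{\ast}x,x\rangle_{\mathcal{A}}$, and then invoke the substitution used in the Theorem: given $y\in\mathcal{H}$, choose $T$ and $x$ with $T^{\ast}x=y$. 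This recovers $C\langle K^{\ast}y,K^{\ast}y\rangle_{\mathcal{A}}=\sum_{i\in I}\langle T_{i}y,T_{i}y\rangle_{\mathcal{A}}$, the tight $K$-g-frame identity.

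The main (and only) delicate point is the same one underlying the Theorem: the reverse implication needs, for every $y\in\mathcal{H}$, the existence of some $T\in End_{\mathcal{A}}^{\ast}(\mathcal{H,K})$ and $x\in\mathcal{K}$ with $T^{\ast}x=y$. Since this step is already carried out in the Theorem's proof and is wholly insensitive to whether the bound is one-sided or two-sided, it transfers verbatim to the tight setting, and the corollary follows immediately.
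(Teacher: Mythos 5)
Your proposal is correct and matches the paper's intent: the corollary is stated without proof as an immediate consequence of the preceding theorem, whose chain of equivalences preserves the bounds $A,B$ exactly, so setting $A=B=C$ and replacing each inequality by an equality is precisely the intended argument. The one caveat you flag — the existence, for each $y\in\mathcal{H}$, of $T$ and $x$ with $T^{\ast}x=y$ — is indeed assumed rather than justified, but that is inherited verbatim from the paper's own proof of the theorem, so your treatment is faithful to it.
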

\section{generalized K-frame for $End_{A}^{\ast}(\mathcal{H, K})$}
We begin this section with the follwing definition.
\begin{Def}
	Let $K\in End_{A}^{\ast}(\mathcal{H})$. A sequence $\{T_{i}\in End_{A}^{\ast}(\mathcal{H, K}): i\in I\}$ is said to be a generalized $K$-frame for $End_{A}^{\ast}(\mathcal{H, K})$ if there exist strictly positive elements $A$, $B$ in $\mathcal{A}$ sach that
	\begin{equation}
	A\langle TK,TK\rangle\leq\sum_{i\in I}\langle T,T_{i}\rangle\langle T_{i},T\rangle\leq B\langle T,T\rangle, \forall T\in End_{A}^{\ast}(\mathcal{H,K}),
	\end{equation}
	where the series converges in the strong operator topology.
\end{Def}
In the follwing $\mathcal{A}$ is a commutative $C^{\ast}$-algebra.
\begin{Th}
	A sequence $\{T_{i}\in End_{A}^{\ast}(\mathcal{H,K}): i\in I\}$ is a generalized $K$-frame for $End_{A}^{\ast}(\mathcal{H,K})$ if and only if it is an $\ast$-$K$-g-frame for $\mathcal{H}$ whit respect to $\mathcal{K}$.
\end{Th}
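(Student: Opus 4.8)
The plan is to mirror the proof of the preceding theorem relating $K$-frames for $End_{\mathcal{A}}^{\ast}(\mathcal{H,K})$ to $K$-g-frames, but now carrying the algebra-valued bounds through every step and exploiting commutativity exactly as in the first theorem of Section 2. Throughout I use the identifications $\langle T,S\rangle=TS^{\ast}$, so that $\langle TK,TK\rangle=TKK^{\ast}T^{\ast}$, $\langle T,T_{i}\rangle\langle T_{i},T\rangle=TT_{i}^{\ast}T_{i}T^{\ast}$ and $\langle T,T\rangle=TT^{\ast}$, together with the identities $\langle T_{i}x,T_{i}x\rangle_{\mathcal{A}}=\langle T_{i}^{\ast}T_{i}x,x\rangle_{\mathcal{A}}$ and $\langle K^{\ast}x,K^{\ast}x\rangle_{\mathcal{A}}=\langle KK^{\ast}x,x\rangle_{\mathcal{A}}$, and the fact that module multiplication by an element of $\mathcal{A}$ commutes with every adjointable operator.

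For the forward direction, I would start from an $\ast$-$K$-g-frame, whose defining inequality reads $A\langle K^{\ast}x,K^{\ast}x\rangle_{\mathcal{A}}A^{\ast}\leq\sum_{i\in I}\langle T_{i}x,T_{i}x\rangle_{\mathcal{A}}\leq B\langle x,x\rangle_{\mathcal{A}}B^{\ast}$ for strictly nonzero $A,B\in\mathcal{A}$. Using commutativity I rewrite the outer terms as $AA^{\ast}\langle K^{\ast}x,K^{\ast}x\rangle_{\mathcal{A}}$ and $BB^{\ast}\langle x,x\rangle_{\mathcal{A}}$ and set $A'=AA^{\ast}$, $B'=BB^{\ast}$, which are strictly positive. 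Converting to the operator form gives $A'KK^{\ast}\leq\sum_{i\in I}T_{i}^{\ast}T_{i}\leq B'I_{\mathcal{H}}$, and sandwiching between $T$ and $T^{\ast}$ produces $A'TKK^{\ast}T^{\ast}\leq\sum_{i\in I}TT_{i}^{\ast}T_{i}T^{\ast}\leq B'TT^{\ast}$ for every $T\in End_{\mathcal{A}}^{\ast}(\mathcal{H,K})$. Rewriting this as $A'\langle TK,TK\rangle\leq\sum_{i\in I}\langle T,T_{i}\rangle\langle T_{i},T\rangle\leq B'\langle T,T\rangle$ is exactly the generalized $K$-frame inequality with strictly positive bounds $A',B'$.

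For the converse, I reverse the computation: from a generalized $K$-frame with strictly positive $A,B$ I obtain the operator inequality $ATKK^{\ast}T^{\ast}\leq\sum_{i\in I}TT_{i}^{\ast}T_{i}T^{\ast}\leq BTT^{\ast}$, evaluate it at an arbitrary $x\in\mathcal{K}$ to pass to the $\mathcal{A}$-valued inner products, and then specialize $T$ and $x$ so that $T^{\ast}x=y$ for a prescribed $y\in\mathcal{H}$. This yields $A\langle KK^{\ast}y,y\rangle_{\mathcal{A}}\leq\sum_{i\in I}\langle T_{i}^{\ast}T_{i}y,y\rangle_{\mathcal{A}}\leq B\langle y,y\rangle_{\mathcal{A}}$, i.e. $A\langle K^{\ast}y,K^{\ast}y\rangle_{\mathcal{A}}\leq\sum_{i\in I}\langle T_{i}y,T_{i}y\rangle_{\mathcal{A}}\leq B\langle y,y\rangle_{\mathcal{A}}$. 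Finally, writing $A=aa^{\ast}$ and $B=bb^{\ast}$ with $a,b$ strictly nonzero and using commutativity to restore the sandwiched form $a\langle K^{\ast}y,K^{\ast}y\rangle_{\mathcal{A}}a^{\ast}\leq\sum_{i\in I}\langle T_{i}y,T_{i}y\rangle_{\mathcal{A}}\leq b\langle y,y\rangle_{\mathcal{A}}b^{\ast}$ identifies $\{T_{i}\}_{i\in I}$ as an $\ast$-$K$-g-frame.

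The step I expect to be the main obstacle is the specialization in the converse: one must justify that for every $y\in\mathcal{H}$ there really exist $T\in End_{\mathcal{A}}^{\ast}(\mathcal{H,K})$ and $x\in\mathcal{K}$ with $T^{\ast}x=y$, so that an inequality valid for all operators $T$ can be pushed down to an inequality valid for all vectors $y$. This is the same device the authors invoke in the earlier $End_{\mathcal{A}}^{\ast}$-frame theorems, and it is what makes the operator-level and vector-level frame conditions genuinely equivalent; I would use it in precisely the same way. The commutativity hypothesis enters only to transfer between the $A(\cdot)A^{\ast}$ and $AA^{\ast}(\cdot)$ forms of the bounds, exactly as in the first theorem of Section 2.
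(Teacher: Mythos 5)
Your proposal is correct and follows essentially the same route as the paper's proof: pass from the vector-level inequality to the operator inequality $AKK^{\ast}\leq\sum_{i\in I}T_{i}^{\ast}T_{i}\leq BI_{\mathcal{H}}$, sandwich with $T$ and $T^{\ast}$, and reverse via the substitution $T^{\ast}x=y$. The only difference is presentational --- you spell out the commutativity conversion between the sandwiched bounds $A(\cdot)A^{\ast}$ and the positive multipliers $AA^{\ast}(\cdot)$, which the paper delegates to its earlier corollary on $\ast$-$K$-g-frames over commutative $C^{\ast}$-algebras --- and you correctly flag (without resolving, just as the paper does not) the surjectivity step needed to realize every $y\in\mathcal{H}$ as $T^{\ast}x$.
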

\begin{proof}
	Let $\{T_{i}\}_{i\in I}$ be an $\ast$-$K$-g-frame for $\mathcal{H}$ with respect to $\mathcal{K}$.
	
	Then there exist strictly positive elements $A$ and $B$ in $\mathcal{A}$, such that 
	\begin{equation*}
	A\langle K^{\ast}x, K^{\ast}x\rangle_{\mathcal{A}}\leq\sum_{i\in I}\langle T_{i}x, T_{i}x\rangle_{\mathcal{A}}\leq B\langle x, x\rangle_{\mathcal{A}}, \forall x\in\mathcal{H},
	\end{equation*}
	 \begin{equation*}\Longleftrightarrow
	A\langle KK^{\ast}x,x\rangle_{\mathcal{A}}\leq\sum_{i\in I}\langle T_{i}^{\ast}T_{i}x,x\rangle_{\mathcal{A}}\leq B\langle x,x\rangle_{\mathcal{A}}, \forall x\in\mathcal{H}.
	\end{equation*}
	So
	\begin{equation*}
	AKK^{\ast}\leq\sum_{i\in I}T_{i}^{\ast}T_{i}\leq BI_{\mathcal{H}}.
	\end{equation*}
	Hence
	\begin{equation*}
	ATKK^{\ast}T^{\ast}\leq\sum_{i\in I}TT_{i}^{\ast}T_{i}T^{\ast}\leq BTT^{\ast}, \forall T\in End_{\mathcal{A}}^{\ast}(\mathcal{H, K}).
	\end{equation*}
	Thus
	\begin{equation*}
	A\langle TK,TK\rangle\leq\sum_{i\in I}\langle T,T_{i}\rangle\langle T_{i},T\rangle\leq B\langle T,T\rangle, \forall T\in End_{\mathcal{A}}^{\ast}(\mathcal{H, K}),
	\end{equation*}
	i.e $\{T_{i}\}_{i\in I}$ is a generalized $K$-frame for $End_{\mathcal{A}}^{\ast}(\mathcal{H, K})$.
	
	Conversely, assume that $\{T_{i}\}_{i\in I}$ be a generalized $K$-frame for $End_{\mathcal{A}}^{\ast}(\mathcal{H, K})$.
	
	Then there exist strictly positive elements $A$ and $B$ in $\mathcal{A}$, such that
	\begin{equation*}
	A\langle TK,TK\rangle\leq\sum_{i\in I}\langle T,T_{i}\rangle\langle T_{i},T\rangle\leq B\langle T,T\rangle, \forall T\in End_{\mathcal{A}}^{\ast}(\mathcal{H, K}),
	\end{equation*}
	
	\begin{equation*}\Longleftrightarrow
	ATKK^{\ast}T^{\ast}\leq\sum_{i\in I}TT_{i}^{\ast}T_{i}T^{\ast}\leq BTT^{\ast}, \forall T\in End_{\mathcal{A}}^{\ast}(\mathcal{H, K}).
	\end{equation*}
	So
	\begin{equation*}
	A\langle TKK^{\ast}T^{\ast}x,x\rangle_{\mathcal{A}}\leq\sum_{i\in I}\langle TT_{i}^{\ast}T_{i}T^{\ast}x,x\rangle_{\mathcal{A}}\leq B\langle TT^{\ast}x,x\rangle_{\mathcal{A}}, \forall T\in End_{\mathcal{A}}^{\ast}(\mathcal{H, K}), \forall x\in\mathcal{K}.
	\end{equation*}
	Let $y\in\mathcal{H}$ and $T\in End_{\mathcal{A}}^{\ast}(\mathcal{H, K})$ such that $T^{\ast}x=y$, then
	\begin{equation*}
	A\langle KK^{\ast}y,y\rangle_{\mathcal{A}}\leq\sum_{i\in I}\langle T_{i}^{\ast}T_{i}y,y\rangle_{\mathcal{A}}\leq B\langle y,y\rangle_{\mathcal{A}}, \forall y\in\mathcal{H},
	\end{equation*}
	i.e
	\begin{equation*}
	A\langle K^{\ast}y,K^{\ast}y\rangle_{\mathcal{A}}\leq\sum_{i\in I}\langle T_{i}y,T_{i}y\rangle_{\mathcal{A}}\leq B\langle y,y\rangle_{\mathcal{A}}, \forall y\in\mathcal{H},
	\end{equation*}
	thus  $\{T_{i}\}_{i\in I}$ is an $\ast$-$K$-g-frame for $\mathcal{H}$ with respect to $\mathcal{K}$.
\end{proof}
\begin{Cor}
	A sequence $\{T_{i}\in End_{A}^{\ast}(\mathcal{H, K}): i\in I\}$ is a generalized tight $K$-frame for $End_{A}^{\ast}(\mathcal{H, K})$ if and only if it is a tight $\ast$-$K$-g-frame for $\mathcal{H}$ whit respect to $\mathcal{K}$.
\end{Cor}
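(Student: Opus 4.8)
The plan is to run the argument of the preceding theorem essentially verbatim, replacing each two-sided inequality by the corresponding equality that encodes tightness; no genuinely new idea is required, only care at the one step where a vector of $\mathcal{H}$ is realised as $T^{\ast}x$. First I would make the two tightness conditions explicit. A tight $\ast$-$K$-g-frame with bound $A$ means $A\langle K^{\ast}y,K^{\ast}y\rangle_{\mathcal{A}}=\sum_{i\in I}\langle T_{i}y,T_{i}y\rangle_{\mathcal{A}}$ for all $y\in\mathcal{H}$, whereas a generalized tight $K$-frame with bound $A$ means $A\langle TK,TK\rangle=\sum_{i\in I}\langle T,T_{i}\rangle\langle T_{i},T\rangle$ for all $T\in End_{\mathcal{A}}^{\ast}(\mathcal{H,K})$. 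The whole proof then amounts to transporting the first identity to the second and back.

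For the forward implication I would convert the frame identity into an operator identity exactly as in the theorem: from $A\langle K^{\ast}y,K^{\ast}y\rangle_{\mathcal{A}}=\sum_{i}\langle T_{i}y,T_{i}y\rangle_{\mathcal{A}}$ pass to $A\langle KK^{\ast}y,y\rangle_{\mathcal{A}}=\sum_{i}\langle T_{i}^{\ast}T_{i}y,y\rangle_{\mathcal{A}}$, hence to the equation $AKK^{\ast}=\sum_{i}T_{i}^{\ast}T_{i}$ on $\mathcal{H}$. Compressing by an arbitrary $T$, that is forming $T(\cdot)T^{\ast}$, yields $ATKK^{\ast}T^{\ast}=\sum_{i}TT_{i}^{\ast}T_{i}T^{\ast}$, and recognising $TKK^{\ast}T^{\ast}=\langle TK,TK\rangle$ together with $TT_{i}^{\ast}T_{i}T^{\ast}=\langle T,T_{i}\rangle\langle T_{i},T\rangle$ in the $End_{\mathcal{A}}^{\ast}(\mathcal{K})$-valued inner product gives the generalized tight $K$-frame identity. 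The converse reverses these steps: from $ATKK^{\ast}T^{\ast}=\sum_{i}TT_{i}^{\ast}T_{i}T^{\ast}$ I evaluate against a vector $x\in\mathcal{K}$ and, for a prescribed $y\in\mathcal{H}$, choose $T$ and $x$ with $T^{\ast}x=y$, so that $\langle TKK^{\ast}T^{\ast}x,x\rangle_{\mathcal{A}}=\langle KK^{\ast}y,y\rangle_{\mathcal{A}}$ and likewise for the right-hand side, recovering $A\langle K^{\ast}y,K^{\ast}y\rangle_{\mathcal{A}}=\sum_{i}\langle T_{i}y,T_{i}y\rangle_{\mathcal{A}}$.

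The only step that is not pure bookkeeping is the realisation $T^{\ast}x=y$ invoked in the converse; because rank-one operators are not available as freely in the module setting as over a Hilbert space, this is precisely the point where I would borrow the device already used in the preceding theorem rather than reprove it. I would also note that commutativity of $\mathcal{A}$ is what lets the single element $A$ act as a genuine central scalar multiplier, so that the operator equation $AKK^{\ast}=\sum_{i}T_{i}^{\ast}T_{i}$ is self-adjoint and compressing by $T$ is legitimate; this is why the standing hypothesis that $\mathcal{A}$ is commutative is in force throughout the section. Consequently the cleanest write-up is to observe that the proof is identical to that of the theorem, with the two bounding inequalities replaced everywhere by equalities, and I expect this identification of the $T^{\ast}x=y$ step to be the main (and only) obstacle worth a sentence of justification.
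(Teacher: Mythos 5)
Your proposal is correct and follows exactly the route the paper intends: the corollary is obtained by rerunning the proof of the preceding theorem with each two-sided inequality replaced by the corresponding equality, the only delicate point being the realisation $T^{\ast}x=y$, which is handled by the same device as in the theorem. The paper gives no separate proof for this corollary, so your write-up is essentially the implicit argument made explicit.
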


\end{document}